\journal{Journal Name}
\newcommand{\eps}{\varepsilon}
\newcommand{\Real}{\mathbf{R}}
\newcommand{\chro}{\overrightarrow{\rm exp}\int}
\newcommand{\ad}{\mbox{\rm ad}}
\newtheorem{mthm}{Theorem}
\newtheorem{thm}{Theorem}[section]
\newtheorem{lem}[thm]{Lemma}
\newtheorem{prop}[thm]{Proposition}
\newtheorem{defn}[thm]{Definition}
\newtheorem{rem}[thm]{Remark}
\newtheorem{asmp}[thm]{Assumption}
\newtheorem{questi}{Question}
\begin{document}

\begin{frontmatter}


\title{Ensemble Controllability by Lie algebraic methods}
\author[SISSA,MIAN]{A.~Agrachev}
\address[SISSA]{Internat. School for Advanced Studies (SISSA), v.~Bonomea, 265, Trieste, 34136  Italy}
\address[MIAN]{Steklov Mathematical Institute, Russian Acad. Sciences, Moscow, Russia}
\ead{agrachev@sissa.it}
\author[UIUC]{Yu.~Baryshnikov}
\address[UIUC]{Univ. of Illinois at Urbana-Champaign
1409 W. Green Str., Urbana IL 61801, USA}
\ead{ymb@illinois.edu}
\author[DIMAI]{A.~Sarychev}
\address[DIMAI]{University of Florence, DiMaI, v.~delle Pandette 9, Firenze,  50127 Italy}
\ead{asarychev@unifi.it}


\begin{abstract} We study possibilities to control an ensemble (a parameterized family) of nonlinear control systems  by a  single parameter-independent control. Proceeding by Lie algebraic methods we establish genericity of exact controllability property for finite ensembles, prove sufficient approximate controllability condition for a model problem in $\mathbb{R}^3$, and provide a variant  of  Rashevsky-Chow theorem for  approximate controllability of control-linear ensembles.
\end{abstract}

\begin{keyword}
Infinite-dimensional control systems \sep ensemble controllability \sep Lie algebraic methods


\end{keyword}

\end{frontmatter}

\bibliographystyle{elsarticle-harv}
\bibliography{references}

\section{Introduction}

\subsection{Motivation}

    Over the last decade there has been a rise of interest regarding  controllability of ensembles - parameterized families - of nonlinear control systems
\[\dot{x}=f^\theta (x,u), \ \theta \in \Theta ,  \]  by a  single
$\theta$-independent control $u(\cdot)$. Such problems arise for example, from a necessity to control a system  with a "structured uncertainty", when some  parameters of the system are subject to "dispersion".

The problems of designing a control, which  compensates the dispersion, appear for example in NMR spectroscopy.
    Study of the control of Bloch equation under various types of dispersion has been initiated by S.~Li and N.~Khaneja (\cite{LiKh05,LiKh06,LiKh09}).  The state space of Bloch equation is a (matrix) Lie group, and therefore  Lie algebraic notions and tools, such as e.g. Campbell-Hausdorff formula, appeared in its study naturally.
    The core of  the  approach of S.~Li and N.~Khaneja is "generating higher order Lie brackets by use of the control vector fields which carry higher order powers of the dispersion parameters to investigating ensemble controllability".
More  recent publication by K.~Beauchard-J.-M.~Coron-P.~Rouchon (\cite{BCR}), also dedicated to the Bloch equations with dispersed parameter,  invoked {\it analytic} methods to obtain finer results on ensemble controllability.

In the current presentation we search for an extension of the
Lie algebraic approach of geometric control theory onto  ensembles of nonlinear systems.

 {\it Continual} ensemble is an infinite-dimensional control system with  finite-dimensional space of control parameters. Therefore exact controllability would in general fail, a mechanism for such failure is explained in another context in \cite{BMS}. We concentrate  on {\it approximate ensemble controllability}  by means of controls of  {\it fixed} finite dimension.

On the contrast to the  above mentioned publications  we do not use any expansion in the parameter $\theta$, nor do we
assume any smoothness of ensembles in $\theta$. Instead we advocate an approach, which combines use of iterated Lie brackets and hence Taylor series in state variables,  and Fourier-type series in the parameter $\theta$.

We start with {\it finite} ensembles. For such ensembles  the Lie rank criteria of {\it exact} controllability of a single system can be reformulated in  a rather direct way. We  prove in Section~\ref{fecl} that  the property of global controllability  for a {\it finite ensemble of control-linear systems} is generic (Theorem~\ref{lobrens}). In Section~\ref{ferb} we establish (Theorem~\ref{cerb}) global controllability by means of a single scalar control for  a {\it finite ensemble of rigid bodies} with generic inertial parameters.

Two examples of {\it continual ensembles} are studied in Sections~\ref{toym},\ref{rcens}. 

First is a model example of an
ensemble in $\mathbb{R}^3$. We seek for a control, which generates a loop in $\mathbb{R}^2$ and makes the third coordinate  to trace approximately a prescribed target (say, a curve or a surface). Theorem~\ref{lifr}  provides sufficient and necessary condition for the approximate controllability.

In Section \ref{rcens} we study  general ensemble of control-linear systems on a manifold.
     Theorem~\ref{trc}  provides  sufficient  approximate controllability criterion; it is  an ensemble version of Rashevsky-Chow theorem.
     
     Both criteria are formulated  in terms of {\it Lie algebraic span}.  

A number of publications (see \cite{DuSa,Led,SaMa}) contain variants of Rashevsky-Chow theorem in infinite dimension. We explain in Section \ref{rcens} the difference between our criteria and the results of the publications cited.

\subsection{Definitions of ensemble controllability}
Let $M$ be a $C^\ell$-manifold\footnote{In our presentation we consider either analytic case $\ell=\omega$ or infinitely smooth case $\ell=\infty$};    $U \subset \mathbb{R}^r$; $\Theta$ - compact  subset of a Lebesgue measue space.

We consider {\it ensembles} of control systems parameterized by $\theta \in \Theta$
\begin{equation}\label{ens}
\frac{dx^\theta}{dt}=f^\theta(x,u), x^\theta \in M, \ u \in U, \ \theta \in \Theta .
\end{equation}
   Ensemble is {\it finite}, whenever  $\Theta$ is finite and is {\it infinite} otherwise. Note that the control $u(\cdot)$ in \eqref{ens} is assumed to be $\theta$-independent, i.e. {\it all the systems of the ensemble are driven by the same control}.

We are going to study  approximate controllability of ensembles \eqref{ens}.

\begin{defn}[cf. \cite{LiKh05}]
\label{defec}
Let $\alpha(\theta)$ be  an ensemble of initial data
\begin{equation}\label{ensid}
 x^\theta(0)=\alpha(\theta),
\end{equation}
and $\omega(\theta)$ be a target ensemble.

We say that  ensemble \eqref{ens} is $L_p$-approximately steerable
from $\alpha(\theta)$ to $\omega(\theta)$ in  time $T>0$,
     if  for any $\delta >0$ there exists a $\theta$-independent control $\bar{u}(t), \ t \in [0,T]$ (depending  on $\delta$) such that for the  trajectories
of the ensemble
\[\frac{dx^\theta}{dt}=f^\theta(x^\theta,\bar{u}(t))\]
with the initial data (\ref{ensid}),
there holds:
\[\|x^\theta(T)-\omega(\theta)\|_{L_p(\Theta)} < \delta .\]

  Ensemble \eqref{ens} is $L_p$-approximately  controllable (in time $T$) if  for each pair of measurable bounded maps
  $\alpha(\theta), \omega(\theta)$ it is $L_p$-approximately steerable from  $\alpha(\theta)$ to  $\omega(\theta)$ (in time $T$).
   \hfill  $\qed$
\end{defn}


Another definition of controllability, which in  some  cases is slightly stronger, than   approximate controllability,  can  in our view  be useful.

For the  ensemble \eqref{ens} we define a {\it moment} corresponding to a probability density
$p(\theta)$ on the space of parameters $\Theta$:
\[\langle p ,x^\theta(t) \rangle =\int_\Theta \langle p(\theta),x^\theta(t)\rangle d\theta .\]

\begin{defn}
The ensemble  (\ref{ens}) is controllable in momenta if for any finite ensemble of probability densities
 $p_1(\theta), \ldots ,p_m(\theta)$, for any initial data (\ref{ensid}) and each $m$-ple $(\pi_1, \ldots , \pi_m) \in \mathbb{R}^m$
 there exists a control $\bar{u}(t), \ t \in [0,T]$, which steers the ensemble of initial data (\ref{ensid})
to a  terminal ensemble $x^\theta(T)$, for which $\langle p_j ,x^\theta(T) \rangle =\pi_j, \ j=1, \ldots , m$.
\hfill$\qed$
\end{defn}

The criteria of controllability in the momenta can be obtained by the  methods, introduced below.
Still the technicalities differ and we leave the presentation for another occasion.

    Also for continual ensembles we  restrict our attention to the case, where the parameter $\theta$ enters the right hand side of \eqref{ens}, while the initial data does not depend on $\theta: \ \alpha(\theta) \equiv \tilde x.$
An interesting question of controllability of continual ensembles of initial data (interpreted as controllability in the spaces of surfaces/curves) will be treated elsewhere.

\subsection{On  Lie algebraic or geometric control approach}

The {\it geometric
control theory} approaches controllability,
observability and optimality properties
of nonlinear control systems  investigating
  the structure of the Lie algebra, generated by the set of vector fields, which "constitute"  a control  system.
Nagano's theorem puts it in strict terms, stating that two control systems, satisfying  the same {\it Lie relations}, are equivalent up to a coordinate change.
   Identification of  the complete set of Lie relations is in general not possible, but often  a finite subset of this set suffices for establishing controllability (see \cite{AgSch} for details).

    It is rather straightforward  to extend the geometric control approach  to controllability of a single system
    onto the case of {\it finite ensembles}: $|\Theta|=N$. One can just see the   ensemble  as a single system on a carthesian product of $N$ copies\footnote{Minor modification of the approach  allows to deal with control systems defined on different $C^\infty$ manifolds $M^1, \ldots , M^N$.} of the state space $M$  and apply Lie algebraic (Lie rank) methods to establish controllability of this system. Two observations are due: i) for finite ensembles approximate controllability "usually" implies exact controllability; ii) the Lie rank and hence the number of iterated Lie brackets, needed for  the verification of controllability, grows and tends to infinity with $N \to \infty$.

When dealing with infinite and in particular with continual ensembles   tempting is the idea, firt,  to discretize $\Theta$, then to establish (when possible) {\it exact} controllability of the discretized {\it finite} ensemble and finally refine the discretization (increasing the number of "nodes") and   conclude the approximate controllability of the continual ensemble. 

Unfortunately this artless idea seems to fail. The reason is that with the refinement of the discretization the number of the iterated Lie brackets, involved, and hence the
{\it complexity} of the corresponding controls grow unboundedly. The 'nodal' systems are driven by the control of high complexity   to the target, but one looses control of what happens with the systems "between the nodes".

Leaving this idea out we instead view the  ensemble \eqref{ens} as a system in an infinite-dimensional space of functions, defined on $\Theta$,  and
seek for an infinite-dimensional variant of  the {\it method of Lie extensions}. In the next few paragraphs  we
describe the idea informally.

The classical Lie extensions method deals with the vector fields, which are the  sections of
the tangent bundle $TM$. Below we consider instead the fiber bundles  over the base $M$ with the infinite-dimensional fibers $L_p(\Theta,T_xM)$ over each $x \in M$.
 Analogues of vector fields are the sections of the  fiber bundle.
 We  introduce kind of Lie structure for these sections
 by taking  Lie brackets on $M$ for each $\theta \in \Theta$. We  define the Lie extensions and iterating them seek for  an analogue of {\it Lie rank condition}.

 Note that if $\Theta$ is finite then the fiber is just a Carthesian product of a finite number of copies of $T_xM$
 and we come back to the above described approach to finite ensembles.

 Infinite dimensionality intervenes in two ways.
 First, since we take a large but finite number of iterated Lie brackets, we end up with  approximate controllability.
   Second, the usual notions of rank, dimension and linear independence should be treated with more care in the infinite-dimensional situation.
           For model example in Section \ref{toym} we invoke Fourier series in $\theta$; in general it  is useful in constructing appropriate controls.

A natural extension of the notion of ensemble controllability would be the study of controllability in the space of curves or surfaces, or more generally, on the group of diffeomorphisms $\mbox{Diff}\ M $.  A criterion of {\it exact} controllability,
presented in \cite{AgCa}, required the set of controls to be rich enough to allow for
 multiplying vector fields of the system by any smooth functional multiplier.
We look forward to  obtaining results on {\it approximate} controllability on
$\mbox{Diff}\ M $ by means of finite-dimensional control.



\section{Basic assumptions}
\label{prel}

The following two assumptions for the dynamics  of \eqref{ens}
hold for the continual ensembles, treated in
Sections \ref{toym},\ref{rcens}.

Let $M$ be a real analytic ($C^\omega$) manifold.

 \begin{asmp}[Uniform analyticity in $x$]
\label{unal}
The vector fields   $X^\theta(x), \ x \in M$  can be extended  for each $\theta \in \Theta$ to   (complex)-analytic fields $X^\theta(z), \ z \in B_\rho(M)$, where $B_\rho(M)$ is a complex $\rho$-neighborhood of the manifold $M$.  \hfill $\qed$
\end{asmp}

\begin{asmp}[Dependence on parameter $\theta$]
\label{unl2}
The set of parameters $\Theta$ is a separable compact Hausdorff space equipped with a Borel measure.
For each $z \in B_\rho(M)$ the map $\theta \to X^\theta(z)$ is continuous. $ \hfill \qed$
\end{asmp}

\section{Elementary case I: control of a finite ensemble of control-linear systems}
\label{fecl}

For finite ensembles the controllability in momenta and approximate controllability are equivalent\footnote{under full Lie rank condition} to  exact controllability.

We consider finite ensembles  of {\it control-linear} systems and prove that  exact controllability property is {\it generic}.

Finite ensemble of $N$ control-linear systems on a $C^\infty$ manifold $M$
is:
\begin{equation}\label{clin}
\dot{x}^\theta=\sum_{j=1}^rX^{\theta j}(x^\theta)u_j(t), \ x^\theta \in M, \ (u_1, \ldots , u_r) \in \mathbb{R}, \   \theta=1, \ldots ,N.
\end{equation}
Once again the control $u(t)=(u_1(t), \ldots , u_r(t))$ is $\theta$-independent.

For a {\it single system} of
ensemble \eqref{clin}, defined by a  $r$-tuple  ($r \geq 2$) of vector fields $\left(X^{\theta 1}, \ldots , X^{\theta r}\right)$, classical result by C.Lobry \cite{Lob} states,  that global controllability property is generic, i.e.
holds  for each $\left(X^{\theta 1}, \ldots , X^{\theta r}\right)$ from a subset of $\left(\mbox{\rm Vect}^\infty(M)\right)^r$, which is open and dense in $C^\nu$-metric with $\nu$ sufficiently large.

We extend this result to the case of ensembles.

\begin{mthm}
\label{lobrens}
There exists a natural number $\nu$  and a subset  $\mathcal{C} \subset$ \linebreak $ \left(\left(\mbox{\rm Vect}^\infty(M)\right)^N\right)^r,$
which is open and dense in $C^\nu(M)$-metric and such that for each $rN$-tuple of vector fields $\left(X^{\theta j}\right),
\theta =1, \ldots N, \ j=1, \ldots , r$ from $\mathcal{C} $ the ensemble \eqref{clin}
is globally exactly controllable.   \hfill$\qed$
\end{mthm}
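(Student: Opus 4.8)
My plan is to view the finite ensemble \eqref{clin} as a single \emph{driftless} (control--linear) system on the product manifold $M^N=M\times\cdots\times M$, driven by the $r$ \emph{product vector fields}
\[
\tilde X^{\,j}(x^1,\dots,x^N)=\bigl(X^{1j}(x^1),\dots,X^{Nj}(x^N)\bigr),\qquad j=1,\dots,r.
\]
Since a control--linear system is symmetric (reversing $u$ reverses the motion), the Rashevsky--Chow theorem tells us that on a connected manifold \emph{global} exact controllability is equivalent to the bracket--generating (full Lie rank) condition: the Lie algebra generated by $\tilde X^{1},\dots,\tilde X^{r}$ must evaluate to the whole tangent space $T_{(x^1,\dots,x^N)}M^N$, of dimension $nN$ with $n=\dim M$, at every point. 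Thus the theorem reduces to proving that the bracket--generating condition holds at every point of $M^N$ for a set of $rN$-tuples $(X^{\theta j})$ that is open and dense in the $C^\nu$ metric. (I take $M$, hence $M^N$, connected; otherwise one argues on connected components.)

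The structure that makes the reduction delicate is that iterated brackets of product fields are again product fields: for any bracket word $B$ one has $B(\tilde X^1,\dots,\tilde X^r)=\bigl(B(X^{1\cdot}),\dots,B(X^{N\cdot})\bigr)$, the \emph{same} word being applied in every factor. Hence the evaluated Lie algebra at $(x^1,\dots,x^N)$ is the span of the ``synchronized'' vectors $w_B=\bigl(B(X^{1\cdot})(x^1),\dots,B(X^{N\cdot})(x^N)\bigr)$, and full rank $nN$ requires these to span the direct sum of the factor tangent spaces. Two features are essential. First, $r\ge 2$: with a single field no brackets are available and the rank cannot exceed one per factor, exactly as in Lobry's theorem \cite{Lob}. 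Second, the $N$ systems must be ``genuinely different'': if two factors carried identical $r$-tuples, every synchronized word would produce equal entries in those two slots, and the rank could never separate them. Genericity is precisely what rules out such degeneracies.

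To obtain genericity I would argue by (multi)jet transversality. Full rank at a point is decided by brackets of length bounded by some $m=m(n,N)$ (a crude growth--vector bound suffices), so there is a uniform jet order $\nu=\nu(n,N,m)$ such that the condition is read off the $\nu$-jets of the $X^{\theta j}$ at the points $x^\theta$. The decisive fact is that the $rN$ fields are chosen \emph{independently}, so at any configuration the $N$ groups of jets $\{j^\nu_{x^\theta}X^{\theta\cdot}\}_{\theta}$ can be perturbed independently---even when some base points $x^\theta$ coincide in $M$, because the fields themselves are distinct. I would encode ``rank $<nN$'' as a determinantal (hence algebraic, stratifiable) subset $\Sigma$ of the appropriate multijet bundle over $M^N$, apply the Thom multijet transversality theorem stratum by stratum, and conclude that for a residual set of $(X^{\theta j})$ the associated section meets $\Sigma$ only transversally---which, once $\Sigma$ has codimension exceeding $\dim M^N=nN$, means it avoids $\Sigma$ altogether. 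Density of the full--rank set follows, while openness is automatic because full Lie rank is a stable ($C^\nu$-open) condition, and this yields the open and dense set $\mathcal C$.

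The main obstacle is the codimension estimate for $\Sigma$ \emph{within the constrained, synchronized family}. One cannot simply invoke Lobry's theorem on $M^N$, since the generic set there lives in the full $\left(\mathrm{Vect}^\infty(M^N)\right)^r$ and may intersect the thin submanifold of product fields badly. I must instead show that, even restricting to synchronized bracket words, independent perturbation of the per-factor jets moves the vectors $w_B$ into general position, so that the locus of linear dependences across factors---the ``resonant'' configurations where a word gives proportional values in different slots---has codimension $>nN$. Establishing this bound, essentially a quantitative version of the statement that each factor is independently bracket--generating and that the factors cannot be synchronously degenerate, is where the real work lies; the bracket bookkeeping on $M^N$ and the multijet transversality are then routine.
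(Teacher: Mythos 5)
Your framework coincides with the paper's own: pass to the product system on $M^N$, reduce global exact controllability of the driftless ensemble to the bracket-generating property via Rashevsky--Chow (the paper's Lemma~\ref{lrc}), note that brackets of product fields are ``synchronized'' product fields, observe that Lobry's theorem on $M^N$ cannot be cited directly because its generic set lives in $\left(\mbox{\rm Vect}^\infty(M^N)\right)^2$ rather than in the thin subspace of product fields, and aim for a jet-transversality argument. Up to that point you and the paper agree. But your proposal stops exactly where the theorem begins: you explicitly defer the codimension estimate for the degeneracy locus $\Sigma$ (``where the real work lies''), and without it the multijet scheme proves nothing. Thom transversality only pays off after one exhibits a concrete family of bracket words whose evaluation map on jets is a submersion, so that ``rank $<nN$'' has a computable codimension exceeding $nN$; asserting that independent per-factor perturbations put the synchronized vectors $w_B$ ``into general position'' is precisely the unproved claim.

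The paper closes this gap with two devices absent from your plan. First, it uses only the one-parameter family $Y(p),\ \ad_X Y(p),\ldots,\ad_X^{S-1}Y(p)$: if $X$ is nonvanishing at \emph{all} $N$ factor points simultaneously, it can be straightened, these iterated brackets become free coordinates of the $(S-1)$-jet of $Y$, and the set of jets where $S$ vectors fail to span a space of dimension $nN$ has codimension $S-(nN-1)$, which exceeds $\dim M^N=nN$ once $S>2Nn$; transversality then gives avoidance, with $\nu$ of order $S$ and a single-jet (not multijet) argument. Second --- and this is the nonobvious point, which the paper flags with an explicit counterexample --- one \emph{cannot} take $X$ or $Y$ themselves as the nonvanishing field: nondegenerate zeros of the $X^\theta$ distributed over different factors are stable under perturbation, so every nearby ensemble has a configuration at which $X$ vanishes in some slot. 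The paper's fix is a pigeonhole lemma: fix $N+1$ pairwise linearly independent constant controls $u_k\in\mathbb{R}^2$ and set $L_k^\theta=u_{k,1}X^\theta+u_{k,2}Y^\theta$; if each $L_k$ vanished at some factor point of a given configuration, two of the $L_k^\theta$ would vanish in the same slot, forcing a common zero of $X^\theta$ and $Y^\theta$ there, which is excluded generically since for a generic pair the span has rank at least one everywhere. Taking $X:=L_k$ nonvanishing on all factors and $Y:=L_{(k+1)\bmod N}$ makes the jet count above executable. So: same route as the paper, correct reduction and correct diagnosis of the obstruction, but the two ideas that constitute the actual proof --- the constant-control pigeonhole lemma and the $\ad_X^kY$ jet-coordinate codimension count --- are missing, leaving your central claim about $\Sigma$ unsubstantiated.
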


\begin{proof}
It suffices to provide a proof for  $r=2$ in \eqref{clin}, or the same, for the ensemble
\begin{equation}\label{2clin}
\dot{x}^\theta=X^{\theta}(x^\theta)u(t)+Y^{\theta}(x^\theta)v(t), \ x^\theta \in M,  \ \theta=1, \ldots ,N.
\end{equation}

We make an obvious step considering on  $M^N$ the cartesian product
of the systems of the ensemble \eqref{2clin}.
Obviously the vector fields   $X=(X^1, \ldots , X^N)$, $Y=(Y^1, \ldots ,Y^N)$,  belong to $\left(\left(\mbox{\rm Vect}^\infty(M)\right)^N\right)^2 \subset  \left(\mbox{\rm Vect}^\infty(M^N)\right)^2$.

The following technical lemma is immediate consequence of Rashevsky-Chow theorem.
\begin{lem}
\label{lrc}
 If the pair $(X,Y)$ is bracket generating at each point of $M^N$, then  ensemble \eqref{2clin} is globally controllable. \hfill$\qed$
\end{lem}

It rests to prove that the bracket generating property is generic in \linebreak $\left(\left(\mbox{\rm Vect}^\infty(M)\right)^N\right)^2$  in $C^\nu$-metric  for some  $\nu$.

C.Lobry's theorem, applied to the couple $(X,Y)$, guarantees  existence and density of  globally controllable couples of vector fields from $\left(\mbox{\rm Vect}^\infty(M^N)\right)^2$,
while we need them to belong to  a smaller set $\left(\left(\mbox{\rm Vect}^\infty(M)\right)^N\right)^2$. Still one can modify the original idea of C.Lobry in order to cover this case.
This  is done in Appendix.
\end{proof}





\section{Elementary case II: Controllability of a finite ensemble of rigid bodies}
\label{ferb}


Consider  an ensemble of $N$  rigid bodies, with the evolution of the momenta, described
by an ensemble of Euler equations
\begin{equation}\label{enrb}
  \dot{K}^\theta=K^\theta \times J^\theta K^\theta + Lu, \ \theta=1, \ldots ,N,\  u \in U \subset \mathbb{R}, \ \mbox{int conv}(U) \ni 0;
\end{equation}
 with  the  {\it scalar} control torque $u(t)$ ({\em in body}),  applied along one and the same direction $L$ to all bodies.

Here $J^\theta \in \mathbf{J}$ are  the (inverses of the) inertia tensors of the bodies;
$\mathbf{J}$ is a closed subset with nonempty interior of the set of  symmetric positive definite
$(3 \times 3)$-matrices.

We restrict ourselves to an open subset  of dynamically asymmetric bodies, or equivalently, the matrices $J^\theta$ with distinct positive eigenvalues in $\mbox{int }\mathbf{J}$.

Without loss of generality we may take as $U=[-\beta,\beta], \ \beta>0$.
Admissible controls $u(t)$ are arbitrary measurable functions, but
piecewise-continuous and  piecewise constant $u(t)$ suffice for controllability.

Consider the cartesian product  of the systems (\ref{enrb})  defined on  $(\mathbb{R}^3)^N$. We provide $(\mathbb{R}^3)^N$ with the euclidean structure and  with the standard volume measure of the cartesian product.

Putting $K=(K^1, \ldots , K^N)$,  $b_L=(L, \ldots , L)$,
$J=\mbox{diag}(J^1, \ldots J^N)$ the $(3N \times 3N)$ block diagonal matrix,
we get a control-affine system on $(\mathbb{R}^3)^N$:
 \[\dot{K}=K \times J K+ b_L u , \]
 (the cross product is applied componentwise).

We denote by $\mathcal{E}_{J}(K)$ the Euler term: $\mathcal{E}_J(K)=K \times J K.$

\subsection{Recollection: controllability result for a single rigid body}

  For \eqref{enrb} with $N=1$    controllability result has been established by two different methods  in \cite{Bon,AgSr0},  see also \cite[Ch.6,8]{AgSch}).

\begin{prop}\label{sirb} For an asymmetric $J^1$ and  generic  $L$,  the single body  is globally controllable. \hfill $\qed$
\end{prop}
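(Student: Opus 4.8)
The plan is to regard the single body as a control-affine system
\[
\dot K=\mathcal{E}_J(K)+Lu,\qquad \mathcal{E}_J(K)=K\times JK,\ u\in[-\beta,\beta],
\]
on $\mathbb{R}^3$ and to combine two facts: the drift $\mathcal{E}_J$ is recurrent, and the pair $(\mathcal{E}_J,L)$ satisfies the full Lie rank condition. The controllability criterion for systems with recurrent drift (see \cite[Ch.~6,8]{AgSch}) then yields global controllability.

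\emph{Recurrence of the drift.} Since $K\cdot(K\times JK)=0$, the flow of $\mathcal{E}_J$ preserves the spheres $\{\|K\|=r\}$. On each sphere the Euler top is completely integrable: its orbits are the intersections with the energy ellipsoids $\{K\cdot JK=\mathrm{const}\}$, hence closed curves, together with the equilibria lying along the principal axes. Consequently every point of $\mathbb{R}^3$ is recurrent, i.e. $\mathcal{E}_J$ is Poisson stable. This is the property that will let me reverse the drift.

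\emph{Lie rank condition.} The control field $L$ is constant, and a direct computation gives
\[
[\mathcal{E}_J,L](K)=-(L\times JK+K\times JL),\qquad [[\mathcal{E}_J,L],L]=2\,L\times JL .
\]
If $L$ is not an eigenvector of $J$, the vectors $L$ and $L\times JL$ are independent, and for $K$ off a proper subvariety the $K$-dependent field $[\mathcal{E}_J,L](K)$ supplies a third independent direction. I would then check that by taking a few further brackets the generated Lie algebra has full rank $3$ at \emph{every} point of $\mathbb{R}^3$, provided $J$ has distinct eigenvalues and $L$ avoids a proper algebraic set of directions; this is the required genericity condition on $L$.

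\emph{Conclusion.} Because $\mathcal{E}_J$ is Poisson stable and lies in the attainable directions (set $u=0$), it belongs to the Lie saturate of the system, and the recurrence places $-\mathcal{E}_J$ in the saturate as well; by the previous step the saturated family $\{\pm\mathcal{E}_J,\pm L\}$ is bracket generating. Rashevsky-Chow applied to this symmetric bracket-generating family on the connected manifold $\mathbb{R}^3$ gives that the attainable set from any $K_0$ is all of $\mathbb{R}^3$: the drift confines motion to spheres while $Lu$ changes $\|K\|^2$ at the rate $(K\cdot L)u$, so transitions between spheres are available and global controllability follows. The main obstacle I expect is the Lie-rank verification at the \emph{degenerate configurations}---the origin and the equilibria of the drift along the principal axes, where the low-order brackets above vanish or collapse; resolving these forces a passage to higher iterated brackets and is exactly where the distinctness of the eigenvalues of $J$ and the precise genericity restriction on $L$ enter, carving out the ``bad'' directions of $L$ for which the rank drops somewhere. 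A secondary technical point is to make the passage from Poisson stability to reversibility of the drift rigorous on the noncompact state space; here the confinement of drift orbits to spheres is essential.
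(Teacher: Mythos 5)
Your overall strategy coincides with the paper's: recurrent drift plus bracket generating property, combined via Bonnard's criterion (Proposition~\ref{lobon}). But the one step where the asymmetry of $J$ and the genericity of $L$ actually do any work --- verifying full Lie rank at \emph{every} point of $\mathbb{R}^3$ --- is exactly the step you do not carry out: you write ``I would then check that by taking a few further brackets the generated Lie algebra has full rank $3$ at every point,'' and you correctly identify the obstruction yourself, since your third direction $[\mathcal{E}_J,L](K)=L\times JK+K\times JL$ is linear in $K$, vanishes at the origin, and degenerates on a proper subvariety. As it stands the proposal is therefore incomplete at its core. The paper avoids the degenerate-configuration problem altogether by producing a third \emph{constant} vector field: with $V^0_{J,L}=L$ and $V^1_{J,L}=\left[V^0_{J,L},[\mathcal{E}_J,V^0_{J,L}]\right]=2L\times JL$, it takes $V^2_{J,L}=\left[V^1_{J,L},[\mathcal{E}_J,V^0_{J,L}]\right]=\Lambda_{J,L}V^1_{J,L}$, where $\Lambda_{J,L}K=L\times JK+K\times JL$. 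Since $V^0,V^1,V^2$ are constant, the single condition $\det\left(V^0_{J,L}\,|\,V^1_{J,L}\,|\,V^2_{J,L}\right)\neq 0$ --- which by direct computation holds for dynamically asymmetric $J$ precisely when $L$ avoids the union of the three principal axes and the two separatrix planes --- gives bracket generation at all $K\in\mathbb{R}^3$ simultaneously, origin and drift equilibria included. Note also the paper's warning that ``a few further brackets'' chosen carelessly need not help: $\left[V^1_{J,L},[\mathcal{E}_J,V^1_{J,L}]\right]$ is collinear to $L$ for every $J$.

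A secondary inaccuracy: your recurrence argument claims that on each sphere the orbits of the Euler field are closed curves or equilibria, hence ``every point of $\mathbb{R}^3$ is recurrent.'' This is false on the separatrices, whose orbits are heteroclinic to the unstable middle-axis equilibria and never return near their initial point. The recurrence property actually needed (the one the paper defines and Bonnard's theorem uses) is weaker --- only \emph{some} point of each neighborhood must return after arbitrarily large time --- and it does survive your picture, since separatrices are limits of periodic orbits; but the cleaner route, and the one the paper takes because it transfers verbatim to the $N$-body ensemble, is the Poincar\'e recurrence theorem: $\mathcal{E}_J$ is divergence free and preserves $\|K\|^2$, so the balls $B_r$ are invariant sets of finite volume. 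With the Lie-rank step repaired as above, your conclusion via the Lie saturate and Rashevsky--Chow is sound, though it merely re-derives Proposition~\ref{lobon}; the side remark about changing $\|K\|^2$ at rate $(K\cdot L)u$ is then superfluous.
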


Global controllability of single equation \eqref{enrb} can be derived from the bracket generating property
(see Proposition \ref{lobon}), satisfied by the pair of vector fields
$\left(\mathcal{E}_{J^1}(K),b_L(K)\right)$ and the
{\it recurrence property} of  $\mathcal{E}_{J^1}(K)$.

In the next Subsection we  establish controllability of a generic ensemble of $N$ rigid bodies.
Proposition~\ref{sirb} is a special case of this result.

\subsection{Controllability of ensemble of rigid bodies}

\begin{mthm}\label{cerb}
Given $L\in\mathbb{R}^3 \setminus 0$,
and integer $N \geq 1$ there exists an open dense subset $\mathcal{D}  \subset  \mathbf{J}^N$, such that for each $(J^1, \ldots , J^N) \in \mathcal{D}$  the finite ensemble of rigid bodies  \eqref{enrb} is globally exactly controllable by a torque along $L$. Besides for each compact subset   $\mathcal{C} \subset \mathcal{D}$ there exists an upper bound $T_{\mathcal{C}}>0$ for minimal attainability times. \hfill $\qed$
  \end{mthm}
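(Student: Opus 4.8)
The plan is to follow the same two ingredients that yield controllability of a single rigid body (Proposition~\ref{sirb}), transplanting them to the product system. Passing, as in the text, to the cartesian product we must prove global controllability of the single control-affine system $\dot K=\mathcal{E}_J(K)+b_L u$ on $(\mathbb{R}^3)^N$, with $\mathcal{E}_J(K)=(\mathcal{E}_{J^1}(K^1),\dots,\mathcal{E}_{J^N}(K^N))$ and $b_L=(L,\dots,L)$. I would obtain this from the classical principle (see \cite{Bon,AgSr0} and \cite[Ch.~6,8]{AgSch}) that a control-affine system whose drift is Poisson stable and which satisfies the full Lie rank condition at every point is globally controllable. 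Thus the proof splits into (a) recurrence of the drift $\mathcal{E}_J$, (b) genericity of the bracket-generating property of $(\mathcal{E}_J,b_L)$, and (c) the uniform time bound on compact parameter sets. Ingredient (a) is essentially inherited from the one-body case; the real work is (b), with (c) a compactness refinement.

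For (a): each Euler field $\mathcal{E}_{J^\theta}$ satisfies $\tfrac{d}{dt}|K^\theta|^2=2\langle K^\theta,K^\theta\times J^\theta K^\theta\rangle=0$, so its flow leaves every sphere $\{|K^\theta|=r\}$ invariant, and on each such sphere it is the Lie--Poisson (Hamiltonian) flow of the energy $\tfrac12\langle K^\theta,J^\theta K^\theta\rangle$, hence area preserving. The product flow then preserves the product of the area measures on the compact product of spheres; this measure is fully supported, so by Poincar\'e recurrence the wandering set is open of measure zero, therefore empty. Adding the fixed points on the degenerate spheres, every point of $(\mathbb{R}^3)^N$ is nonwandering for $\mathcal{E}_J$, i.e. the drift is Poisson stable.

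The crux is (b). The decisive feature is that the fields are block diagonal: every iterated bracket $V_w$ of $\mathcal{E}_J$ and $b_L$ has $\theta$-component $V_w^{J^\theta}(K^\theta)$ equal to one universal, word-dependent polynomial expression evaluated at $(J^\theta,K^\theta)$ and depending on no other body; for instance
\[[\mathcal{E}_{J^\theta},b_L](K^\theta)=-\big(L\times J^\theta K^\theta+K^\theta\times J^\theta L\big),\qquad [[\mathcal{E}_{J^\theta},b_L],b_L]=-2\,L\times J^\theta L .\]
Because $\mathcal{E}_{J^\theta}$ is quadratic and $b_L$ constant, each factor is separately homogeneous, so scaling the factors independently shows that the rank at $K$ equals the rank at any $(\lambda_1K^1,\dots,\lambda_NK^N)$ with $\lambda_\theta>0$; this reduces the verification of full rank $3N$ to the compact set $K^\theta\in S^2\cup\{0\}$. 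Full rank fails at a point exactly when some nonzero covector $(\xi^1,\dots,\xi^N)$ satisfies $\sum_\theta\langle\xi^\theta,V_w^{J^\theta}(K^\theta)\rangle=0$ for every word $w$. For our fixed $L\neq0$ the one-body result (Proposition~\ref{sirb}) excludes annihilators supported on a single block once $J^\theta$ has distinct eigenvalues and $L$ is non-resonant (an open dense condition on $J^\theta$); the remaining cross-body cancellations are ruled out precisely when the families $\{V_w^{J^\theta}(\cdot)\}_w$ are independent across $\theta$, which I would establish by a generalized Vandermonde / analytic-independence argument using that distinct tensors $J^\theta$ produce analytically independent dependence on the parameter. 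The failure locus is the common zero set of real-analytic minors in $(J,K)$, hence closed, and its projection to $\mathbf{J}^N$ (along the compact $K$-fiber) is closed; exhibiting one $N$-tuple of pairwise distinct, non-resonant tensors at which full rank holds at every $K$ shows this projection is a proper analytic subset, so its complement $\mathcal{D}$ is open and dense. I expect this simultaneous distinguishing of all $N$ bodies by a single scalar control along a fixed $L$ --- the exclusion of cross-body resonances --- to be the main obstacle.

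Finally (c): on a compact $\mathcal{C}\subset\mathcal{D}$ openness gives full Lie rank with brackets of a uniformly bounded order and rank bounded below, so the Krener / ball-box estimates furnish a uniform local attainability time; continuity of the Euler flows in $J\in\mathcal{C}$ makes the recurrence return times used to approximately reverse the drift uniformly bounded on the normalized (compact) part of phase space. Assembling these by a compactness argument yields the claimed bound $T_{\mathcal{C}}$. The delicate point here is controlling recurrence times near equilibria and separatrices of the Euler flow uniformly over $\mathcal{C}$.
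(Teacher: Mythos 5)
Your skeleton coincides with the paper's (pass to the product system on $(\mathbb{R}^3)^N$, combine recurrence of the drift with bracket generation via Bonnard's theorem, i.e.\ Proposition~\ref{lobon}, then handle the time bound by compactness), and your recurrence argument (a) is a sound variant of the paper's divergence-free/invariant-ball argument. But in step (b), which is the heart of the theorem, what you offer is an expectation, not a proof, and the reductions surrounding it fail as stated. First, the independent-blockwise scaling claim is false: a bracket word $w$ of homogeneity degree $d_w$ in $K$, evaluated at $(\lambda_1K^1,\dots,\lambda_NK^N)$, has $\theta$-block $\lambda_\theta^{d_w}V_w^{J^\theta}(K^\theta)$, so different words are rescaled by \emph{different} diagonal matrices and the rank of the span need not be preserved (already with $1$-dimensional blocks, $V_1=(1,1)$ and $V_2=(K^1,K^2)$ have rank $1$ at $(1,1)$ but rank $2$ at $(1,2)$); only uniform scaling preserves the span. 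Second, your density argument has a hole: the projection to $\mathbf{J}^N$ of the failure locus is indeed closed (compact fibers) and proper (given one good tuple), but a proper closed subset --- even a semialgebraic one, and projections of analytic sets are in general only subanalytic, not analytic --- can have nonempty interior, so you get openness of $\mathcal{D}$ but not density. The paper sidesteps both problems with the one structural observation you missed: since $\mathcal{E}_J$ is quadratic and $b_L$ constant, the particular iterated brackets $V^0_{J,L}=L$, $V^{m+1}_{J,L}=\left[V^m_{J,L},[\mathcal{E}_J,V^0_{J,L}]\right]$ of \eqref{libr} are \emph{constant} vector fields on $(\mathbb{R}^3)^N$. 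Hence nonsingularity of the single $3N\times 3N$ matrix $R_N(J^1,\dots,J^N;L)$ of \eqref{detr} gives bracket generation at \emph{every} point $K$ simultaneously --- no pointwise-in-$K$ annihilator analysis and no projection are needed --- and genericity reduces to nonvanishing of the polynomial $\det R_N$ in $J$ alone, whose complement is automatically open and dense. That nonvanishing is exactly the ``generalized Vandermonde'' fact you gesture at, and proving it is the real work: the paper writes $V^{m+1}_{J,L}=\Lambda^m_{J,L}V^1_{J,L}$ with $\Lambda_{J,L}=D_J\hat L$ and runs an induction on $N$ (Lemma~\ref{dtr}) via the degeneration $J^\theta\mapsto\eps J^\theta$ for $\theta<N$ and a Schur-complement computation showing $\det R_N(\eps)\neq 0$ for small $\eps$.

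Step (c) as you propose it would also fail: Poincar\'e recurrence is purely qualitative, and return times of the Euler flow genuinely blow up near its equilibria and separatrices, so no uniform bound on recurrence times over a compact set exists along that route; moreover, in the presence of a drift, full Lie rank does not yield small-time local controllability, so Krener/ball-box estimates do not furnish the uniform local attainability times you invoke. The paper's argument is simpler and avoids all quantitative recurrence: Filippov's theorem gives a minimal attainability time $T(\tilde K,\hat K)$ for each pair; global controllability upgrades attainability to \emph{normal} attainability (\cite{Sus}) in time $T(\tilde K,\hat K)+1$, which is stable under perturbation of both endpoints, so each pair has a neighborhood in which steering is possible within that time; a finite cover of the compact $\mathcal{C}\times\mathcal{C}$ then produces $T_{\mathcal{C}}$. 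Recurrence enters only once, qualitatively, through Proposition~\ref{lobon}.
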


\begin{rem}
The set $\mathbf{J}^N=\mathbf{J} \times \cdots \times \mathbf{J}$ is an open subset of the linear space $\left(Sym (\mathbb{R}^3)\right)^N\ \qed$.
\end{rem}

A more interesting question to be answered is
\begin{questi} Given an $N$-tuple $(J^1, \ldots , J^N)$, with $J^\theta$ pairwise distinct (or belonging to a sphere in $\mathbf J$ and pairwise distinct, or a generic $N$-tuple) and possessing simple eigenvalues, does there exist an open dense subset $\mathbf{L} \subset \mathbb{R}^3$, such that $\forall L \in \mathbf{L}$
the finite ensemble of rigid bodies  \eqref{enrb} is globally controllable? \hfill $\qed$
\end{questi}

We start proving Theorem~\ref{cerb}.

A vector field on a manifold $M$ is {\em recurrent} if $\forall x \in M$, each neighborhood $W_x$ of $x$ and each $t>0$,  there exists a point $\hat x \in W_x$ and time $\hat t >t$ such that $e^{\hat t f}(\hat x) \in W_x$.

Poincare recurrence theorem establishes this property for a broad class of vector fields, which includes $\mathcal{E}_{J^1}(K)$.



 \begin{prop}[Poincare recurrence theorem]\label{Poin}
 If a complete vector field $f$ on a manifold $M$ preserves the volume form (that is, divergence free) and leaves a  set  $A$ of finite volume invariant. Then the restriction of $f$ to $A$ is recurrent.   \hfill $\qed$
 \end{prop}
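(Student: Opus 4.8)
The plan is to reduce this continuous-time statement to a discrete measure-theoretic recurrence lemma for a single volume-preserving map and then exploit the finiteness of the volume through a pigeonhole/disjointness argument. First I would fix the data appearing in the definition of recurrence: a point $x \in A$, an open neighborhood $W_x \subseteq A$ of $x$, and a threshold $t > 0$. Completeness of $f$ guarantees that its flow $e^{sf}$ is defined for all $s \in \mathbb{R}$; since $f$ is divergence free it preserves the volume form and hence the associated volume measure $\mu$, so every time-$s$ map $e^{sf}$ is $\mu$-preserving; and because $f$ leaves $A$ invariant, each $e^{sf}$ restricts to a $\mu$-preserving bijection of $A$. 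As the volume form is nowhere vanishing, the nonempty open set $E := W_x$ has positive measure $\mu(E) > 0$.

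The core step is to discretize time and prove a recurrence lemma. I would choose a step $\tau > t$ and set $T := e^{\tau f}$, a $\mu$-preserving transformation of the finite measure space $(A,\mu)$. The lemma to establish is that almost every point of a positive-measure set returns to that set under some positive iterate of $T$. To prove it, let $E_0 := \{y \in E : T^n y \notin E \text{ for all } n \geq 1\}$ be the subset of points that never return. The preimages $E_0, T^{-1}E_0, T^{-2}E_0, \ldots$ are pairwise disjoint, since an overlap of $T^{-m}E_0$ and $T^{-n}E_0$ with $m < n$ would send a point of $E_0$ back into $E$; by measure preservation they all share the common measure $\mu(E_0)$, and finiteness of $\mu(A)$ then forces $\mu(E_0) = 0$.

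Finally I would assemble the conclusion: since $\mu(E) > 0 = \mu(E_0)$, there is a point $\hat x \in E = W_x$ and an integer $k \geq 1$ with $e^{k\tau f}(\hat x) \in E \subseteq W_x$, so setting $\hat t := k\tau \geq \tau > t$ produces exactly the point and return time required by the definition of recurrence. The one delicate point — and the main obstacle to a clean argument — is ensuring the return happens strictly after time $t$ rather than merely at some positive time; this is handled precisely by choosing the discretization step $\tau$ larger than $t$ at the outset, so that every discrete return time $k\tau$ automatically exceeds $t$. The disjointness bookkeeping in the lemma also deserves care, but it reduces to the elementary observation above.
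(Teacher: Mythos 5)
Your proof is correct, and it is the standard textbook argument for Poincar\'e recurrence: discretize time via the map $T=e^{\tau f}$, show the never-returning set $E_0$ has pairwise disjoint preimages $T^{-n}E_0$ of equal measure (using invertibility and measure preservation of the flow on the invariant set $A$), and conclude $\mu(E_0)=0$ from finiteness of $\mu(A)$. Note that the paper itself states Proposition~\ref{Poin} as a classical fact with no proof supplied, so there is nothing to diverge from; your write-up simply fills in the canonical argument. Two small points worth flagging: (i) choosing the step $\tau>t$ at the outset, so that every discrete return time $k\tau$ automatically exceeds $t$, is exactly the right device for the paper's definition of recurrence, which demands $\hat t>t$; (ii) you implicitly assume $W_x\cap A$ has positive measure (you take $W_x\subseteq A$ open), which need not hold for an arbitrary invariant set $A$ of finite volume, but is harmless here since in the paper's application $A$ is a closed ball $B_r$ and relative neighborhoods of its points have positive volume.
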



 It is immediate to see, that the drift vector field $\mathcal{E}_J(K)=K \times J K$
 is divergence free and   preserves
 $\|K\|^2=\sum_{i=1}^r \|K_i\|^2$. Hence the sets  $B_r=\{K| \ \|K\|^2=\sum_{i=1}^r \|K_i\|^2\leq r^2\} $ of finite volume
 are invariant for the volume preserving vector field $\mathcal{E}_J(K)$, wherefrom the recurrence property follows.

The following result states that bracket generating property of a system of vector fields plus the  recurrence property
of one of them suffices for controllability of the respective control-affine system

\begin{prop}[\cite{Bon1}]\label{lobon} Let the  bracket generating condition \linebreak  $\mbox{Lie}(f_0, \ldots , f_r)(x)=T_xM$ hold at each point of $M$, and $f_0$ possess recurrence property on $M$. Then the  control-affine system $\dot{x}=f_0(x)+\sum_{i=1}^r f^i(x)u_i(t)$ (where the control set $U$ is open and contains the origin) is globally controllable on $M$. \hfill $\qed$
\end{prop}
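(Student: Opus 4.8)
The plan is to work throughout with the positive-time attainable set $\mathcal{A}(x)=\{\,\text{points reachable from }x\text{ in nonnegative time}\,\}$ and to show $\mathcal{A}(x)=M$ for every $x$, the recurrence of $f_0$ being precisely what compensates for the impossibility of flowing backward along the drift. First I would record the accessibility consequences of the bracket generating hypothesis alone. Since $\mbox{Lie}(f_0,\dots,f_r)(x)=T_xM$ everywhere and the control set $U$ is open with $0\in U$, the standard accessibility theorem (Sussmann--Jurdjevic) gives that $\mathcal{A}(x)$ has nonempty interior and, more precisely, $\mathcal{A}(x)\subset\overline{\mbox{int}\,\mathcal{A}(x)}$; the same holds for the time-reversed system $\dot x=-f_0-\sum_i f^i u_i$, whose generating family has the same Lie algebra and hence the same full rank. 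I would also note two elementary inclusions for $y\in\mathcal{A}(x)$: $e^{tf_0}(y)\in\mathcal{A}(x)$ for $t\ge0$ (take $u\equiv0$), and $e^{\pm t f^i}(y)\in\overline{\mathcal{A}(x)}$ for $i\ge1$ by the usual fast-control (time-rescaling) limit, using again that $U$ is open and contains the origin.

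The decisive step, which I expect to be the main obstacle, is to show that $\overline{\mathcal{A}(x)}$ is invariant under the \emph{backward} drift flow $e^{-sf_0}$, $s\ge0$. Here recurrence does the work. Fix $y\in\mbox{int}\,\mathcal{A}(x)$ and $s>0$, and let $W\subset\mathcal{A}(x)$ be a small neighborhood of $y$. Applying the recurrence of $f_0$ to $W$ and the time $s$ yields $\hat y\in W$ and $\hat t>s$ with $e^{\hat t f_0}(\hat y)\in W$. Then $e^{(\hat t-s)f_0}(\hat y)$ is reachable from $x$ (flow along the drift from $\hat y\in\mathcal{A}(x)$ for the nonnegative time $\hat t-s$), while $e^{sf_0}\!\left(e^{(\hat t-s)f_0}(\hat y)\right)=e^{\hat t f_0}(\hat y)\in W$ lies near $y$; by continuity of the flow, $e^{(\hat t-s)f_0}(\hat y)=e^{-sf_0}\!\left(e^{\hat t f_0}(\hat y)\right)$ lies near $e^{-sf_0}(y)$. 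Shrinking $W$ produces points of $\mathcal{A}(x)$ arbitrarily close to $e^{-sf_0}(y)$, so $e^{-sf_0}(y)\in\overline{\mathcal{A}(x)}$. Passing from interior points to all of $\overline{\mathcal{A}(x)}$ via the density statement $\mathcal{A}(x)\subset\overline{\mbox{int}\,\mathcal{A}(x)}$ of the first step completes the invariance.

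With backward drift now available in the closure, I would combine all the motions: $\overline{\mathcal{A}(x)}$ is invariant under the local flows of $+f_0$, $-f_0$ and of $\pm f^i$, $i=1,\dots,r$, hence under the whole group they generate. Consequently $\overline{\mathcal{A}(x)}$ contains the orbit of $x$ of the family $f_0,\dots,f_r$. By the orbit theorem this orbit is an immersed submanifold whose tangent space at each point contains $\mbox{Lie}(f_0,\dots,f_r)=TM$; it is therefore open, and since $M$ is connected it is all of $M$. Thus $\overline{\mathcal{A}(x)}=M$ for every $x$, i.e. the system is approximately controllable.

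Finally I would upgrade approximate to \emph{exact} controllability. Fix $x$ and a target $z$. The backward attainable set $\mathcal{A}^-(z)$ (points from which $z$ is reachable) is the forward attainable set of the time-reversed system, which is accessible by the first step, so $\mbox{int}\,\mathcal{A}^-(z)\neq\emptyset$. Since $\overline{\mathcal{A}(x)}=M$ meets this nonempty open set, there is $w\in\mathcal{A}(x)\cap\mbox{int}\,\mathcal{A}^-(z)$; concatenating a trajectory from $x$ to $w$ with one from $w$ to $z$ gives $z\in\mathcal{A}(x)$. As $x$ and $z$ are arbitrary, the system is globally controllable on $M$.
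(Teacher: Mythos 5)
The paper itself gives no proof of this proposition---it is quoted from Bonnard \cite{Bon1}---so your argument stands on its own. Its architecture is the standard one for this classical result (Lobry--Bonnard, in the language of Lie saturates): Krener accessibility from the bracket generating hypothesis; recurrence of the drift used to show that $\overline{\mathcal{A}(x)}$ is invariant under the backward drift flow; passage to a symmetric family and the orbit theorem to conclude $\overline{\mathcal{A}(x)}=M$; and the upgrade from approximate to exact controllability by intersecting the dense set $\mathcal{A}(x)$ with the nonempty interior of the reversed-time attainable set $\mathcal{A}^-(z)$. Your central recurrence step is carried out correctly, with precisely the paper's definition of recurrence, and the orbit and exactness steps are sound.

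There is, however, one step whose stated justification would fail: the ``elementary inclusion'' $e^{\pm t f^i}(y)\in\overline{\mathcal{A}(x)}$ ``by the usual fast-control (time-rescaling) limit''. The hypothesis is only that $U$ is open and contains the origin, so $U$ may be bounded (in the paper's rigid-body application the relevant control set is an interval $(-\beta,\beta)$); then high-gain controls are unavailable, and no time rescaling suppresses the drift: in small time every reachable displacement lies in a narrow cone around $f_0$. Concretely, take $M=\mathbb{R}^2$, $f_0=\partial/\partial x$, $f^1=\partial/\partial y$, $U=(-1,1)$: the pair is bracket generating, yet $\overline{\mathcal{A}(x_0,y_0)}=\left\{(x_0+T,\,y_0+s)\,:\,T\ge 0,\ |s|\le T\right\}$, which does not contain $e^{tf^1}(x_0,y_0)=(x_0,y_0+t)$ for $t\neq 0$. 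So this inclusion is not a consequence of accessibility plus openness of $U$; it genuinely requires the recurrence of $f_0$. Fortunately you invoke it only \emph{after} proving backward-drift invariance of $\overline{\mathcal{A}(x)}$, so the repair is a reordering plus a standard limit: once $e^{-sf_0}$ preserves $\overline{\mathcal{A}(x)}$, the Trotter products $\bigl(e^{-(t/n)f_0}\circ e^{(t/n)(f_0+u_i f^i)}\bigr)^{n}$, all of whose factors preserve $\overline{\mathcal{A}(x)}$, converge to $e^{t u_i f^i}$, giving invariance under $e^{\pm\tau f^i}$ (both signs because $u_i$ ranges over a neighborhood of $0$, arbitrary $\tau$ by iteration); equivalently, in Lie-saturate terms, $\tfrac12\left(f_0+u f^i\right)+\tfrac12\left(-f_0\right)=\tfrac{u}{2}f^i$ together with positive rescaling. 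With that substitution your proof is complete and coincides with the classical argument.
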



Given the recurrence property of $\mathcal{E}_J(K)$  it remains only to verify the bracket generating property
for the pair of vector fields $\{\mathcal{E}_J(K) , b_L\}$
on  $\left(\mathbb{R}^3\right)^N$.

Given that the vector field $\mathcal{E}_J(K)$ is a polynomial of second degree in $K$ and the vector field $b_L$ is constant,
it is convenient to take into account those iterated Lie brackets, which result in constant vector fields.
These are for example
\begin{eqnarray}\label{libr}
  V^0_{J,L}=b_L(K)=L, V^1_{J,L}=\left[V^0_{J,L},[\mathcal{E}_J,V^0_{J,L}]\right]=2 L \times JL,  \nonumber  \\
   V^{m+1}_{J,L}=\left[V^m_{J,L},[\mathcal{E}_J,V^0_{J,L}]\right], \ m \geq 1.
\end{eqnarray}

 We form $(3N \times 3N)$-matrix
\begin{equation}\label{detr}
R_N\left(J^1, \ldots , J^N;L\right)=\left(
   \begin{array}{cccc}
     V^0_{J^1,L} & V^1_{J^1,L} & \cdots & V^{3N-1}_{J^1,L} \\
     V^0_{J^2,L} & V^1_{J^,L} & \cdots & V^{3N-1}_{J^2,L}\\
     \cdots & \cdots & \cdots & \cdots \\
     V^0_{J^N,L} & V^1_{J^N,L} & \cdots & V^{3N-1}_{J^N,L} \\
   \end{array}
 \right).
\end{equation}

 Bracket generating property for fixed $J$ and $L$
 would be implied by  the non-nullity of the determinant $\det R_N\left(J^1, \ldots , J^N;L\right)$.

For fixed $L$ \eqref{detr} is a polynomial with respect to (the elements of) $J $, its nullity determines an algebraic variety in $\left(Sym (\mathbb{R}^3)\right)^N$. If the determinant  does not vanish {\it identically} on $\left(Sym (\mathbb{R}^3)\right)^N$, i.e. the  variety is proper; then its complement, intersected with $\mathbf{J}^N$, contains an open dense subset of $\mathbf{J}^N$.

\begin{lem}\label{dtr} For each $L$  from an open dense subset of $\mathbb{R}^3$ and for each $N \geq 1$ the determinant  $\det R_N(J^1, \ldots , J^N;L)$ does not vanish
identically on $\left(Sym (\mathbb{R}^3)\right)^N$. \hfill $\qed$
\end{lem}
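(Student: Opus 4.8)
The plan is to read the vectors in \eqref{libr} as iterated applications of a single linear operator, to recognize the determinant $\det R_N$ as a Vandermonde-type quantity after a change of basis, and to reduce the whole statement to producing one tuple where it does not vanish. The first observation is that $\det R_N(J^1,\ldots,J^N;L)$ is a polynomial in the entries of $(J^1,\ldots,J^N)$ and of $L$ jointly, and that it suffices to show it is not the zero polynomial in all these variables together. Indeed, writing it as $\sum_\alpha c_\alpha(L)J^\alpha$, the set of $L$ for which it vanishes identically in $J$ is $\bigcap_\alpha\{c_\alpha(L)=0\}$; if some $c_\alpha$ is a nonzero polynomial in $L$, this set lies in a proper algebraic subset of $\mathbb{R}^3$, whose complement is open and dense. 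Hence it is enough to exhibit one tuple $(L,J^1,\ldots,J^N)$, with the $J^\theta\in Sym(\mathbb{R}^3)$ (diagonal matrices will do), at which $\det R_N\neq0$.

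Next I would simplify the brackets. Since $V^0_{J,L}=L$ is constant and $[\mathcal{E}_J,V^0_{J,L}]$ is the linear field $K\mapsto -(L\times JK+K\times JL)$, the recursion in \eqref{libr} gives $V^{m+1}_{J,L}=-A_{J,L}V^m_{J,L}$, where $A_{J,L}v:=L\times Jv+v\times JL=(\widehat{L}\,J-\widehat{JL})v$ and $\widehat{w}$ is the skew matrix with $\widehat{w}\,v=w\times v$; thus $V^m_{J,L}=(-1)^mA_{J,L}^mL$. For a diagonal $J=\mathrm{diag}(a,b,c)$ one computes $A_{J,L}=D_0(J)\,B(L)$ with $D_0=\mathrm{diag}(c-b,a-c,b-a)$ and $B(L)$ the symmetric matrix with zero diagonal and off-diagonal entries built from $l_1,l_2,l_3$; in particular $\mathrm{tr}\,A_{J,L}=0$ and $\det A_{J,L}=2(c-b)(a-c)(b-a)\,l_1l_2l_3$. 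Consequently, for asymmetric diagonal $J$ and $L$ with all coordinates nonzero, $A_{J,L}$ is invertible; and for generic such $L$ its three eigenvalues $\mu_1,\mu_2,\mu_3$ (possibly complex) are distinct and $L$ is a cyclic vector, i.e. $L,A_{J,L}L,A_{J,L}^2L$ are independent — this is exactly the single-body bracket-generating property underlying Proposition~\ref{sirb}.

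Then comes the key device. Choose diagonal $J^\theta$ with $D_0(J^\theta)=t_\theta D_0(J)$ for prescribed scalars $t_\theta$; this is solvable because $D_0$ only constrains the pairwise differences of the diagonal entries. Then $A_{J^\theta,L}=t_\theta A_{J,L}$, so all blocks share the eigenbasis $v_1,v_2,v_3$ of $A_{J,L}$, with eigenvalues $t_\theta\mu_k$. Writing $L=\sum_k a_kv_k$, the $((\theta,k),m)$ entry of $R_N$ in this common eigenbasis equals $(-1)^m a_k (t_\theta\mu_k)^m$, so after pulling out the block change of basis, the column signs, and the row factors $a_k$,
\[
\det R_N=\pm\,(\det[v_1\,v_2\,v_3])^N(a_1a_2a_3)^N\!\!\prod_{(\theta,k)<(\theta',k')}\!\!(t_{\theta'}\mu_{k'}-t_\theta\mu_k).
\]
The first two factors are nonzero by distinctness of the $\mu_k$ and by cyclicity ($a_k\neq0$); it remains only to pick distinct nonzero scalars $t_\theta$ so that the $3N$ products $t_\theta\mu_k$ are pairwise distinct, which is possible since each coincidence $t_\theta\mu_k=t_{\theta'}\mu_{k'}$ (using $\mu_k\neq0$) excludes only finitely many ratios $t_\theta/t_{\theta'}$. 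For such a choice the Vandermonde product is nonzero, giving a point where $\det R_N\neq0$, as required.

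The main obstacle is precisely this cross-block distinctness: for arbitrary $J^\theta$ the operators $A_{J^\theta,L}$ have different eigenvectors, there is no clean factorization, and one must still rule out accidental coincidences among the $3N$ eigenvalues. The proportionality trick $A_{J^\theta,L}=t_\theta A_{J,L}$ is what tames this, trading the whole problem for the transparent requirement that the scaled eigenvalues $t_\theta\mu_k$ stay apart. Verifying the remaining single-block genericity (distinct nonzero $\mu_k$ and cyclicity of $L$) is then routine and is already visible on a concrete base, e.g. $J=\mathrm{diag}(1,2,4)$ and $L=(1,1,1)$.
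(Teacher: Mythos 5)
Your proposal is correct, but it takes a genuinely different route from the paper's. The paper likewise exploits the degree-one homogeneity of $\Lambda_{J,L}$ in $J$ and works with diagonal tensors, but it argues by \emph{induction on $N$}: it writes $R_N$ in block form with upper-left block $\tilde R_{N-1}$ and lower-right $3\times 3$ block $\tilde R=\left(\Lambda^{3N-3}_{J^N,L}L\,\big|\,\Lambda^{3N-2}_{J^N,L}L\,\big|\,\Lambda^{3N-1}_{J^N,L}L\right)$, invokes the inductive hypothesis for $\tilde R_{N-1}$ and the linear independence of three consecutive $V^m_{J,L}$'s for $\tilde R$, then degenerates $J^1,\ldots,J^{N-1}\mapsto \eps J^1,\ldots,\eps J^{N-1}$ so that, after Schur-complement-type row and column operations, $\det R_N(\eps)$ tends to a nonzero limit as $\eps\to 0$. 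You instead produce a single explicit witness: taking the $J^\theta$ proportional, so that $A_{J^\theta,L}=t_\theta A_{J,L}$ share one eigenbasis, collapses what the paper's footnote calls a noncommutative matrix Vandermonde into a genuine scalar Vandermonde in the $3N$ numbers $t_\theta\mu_k$, and the facts you need at the base (distinct nonzero eigenvalues of $A_{J,L}$, cyclicity of $L$) are exactly the single-body nondegeneracy behind Proposition \ref{sirb}; they do check out on your example $J=\mathrm{diag}(1,2,4)$, $L=(1,1,1)$, where $\Lambda_{J,L}$ has characteristic polynomial $-(\lambda^3+7\lambda+12)$ with nonzero discriminant and $\det\left[L\,|\,\Lambda L\,|\,\Lambda^2 L\right]=-80\neq 0$. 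Your preliminary reduction (some coefficient $c_\alpha(L)$ nonvanishing forces the bad $L$'s into a proper algebraic subset) is sound and matches the paper's remark preceding the lemma, and your sign convention $V^{m+1}=-A_{J,L}V^m$ versus the paper's $V^{m+1}=\Lambda_{J,L}V^m$ is a harmless bracket-orientation discrepancy affecting the determinant only by $\pm 1$. What each approach buys: yours is induction-free, shorter, and yields a closed-form value of the determinant at the witness point --- effectively supplying the explicit Vandermonde computation the authors say they lack; the paper's degeneration argument never needs diagonalizability, distinct eigenvalues, or a common eigenbasis, only invertibility of $\Lambda_{J,L}$ and the $N=1$ nonvanishing, so it is more robust and would extend to situations where the blocks cannot be made simultaneously diagonalizable.
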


The proof of the Lemma goes by induction in $N$. We provide the initial inductive step; {\tt the rest of the proof can be found  in  Appendix}.

The set of zeros of the determinant \[R_1\left(J^1;L\right)=\det \left(\left.
        V^0_{J^1,L} \right| V^1_{J^1,L} \left| V^1_{J^1,L}\right.\right)\]
can be  characterized.
 Direct computation shows (see \cite{AgSch}) that for a dynamically asymmetric $J^1$  and for $L$,
 lying in a complement to the  union $\tilde{\mathbf{L}}$ of $3$ straight lines (principal axes) and two (separatrix) planes,
the determinant  $R_1\left(J^1;L\right) \neq 0$.

This fact together with Proposition~\ref{lobon},  implies the statement of  Proposition \ref{sirb}.
\hfill$\qed$


\begin{rem} One should be selective in choosing  iterated Lie brackets,  when establishing bracket generating property.
For example the  constant vector field $\tilde V_{J,L}^2=\left[V^1_{J,L},[\mathcal{E}_J,V^1_{J,L}]\right]$ is  collinear to $V^0_L=L$ for any $J$. \hfill $\qed$
\end{rem}

\subsection{A remark on the bounds for the attainability time}

We will provide the following reinforcement of the previous statement.

\begin{prop}
Let $\mathcal{C} \subset \left(\mathbb{R}^3\right)^N$ be compact. Under the conditions of Theorem \ref{cerb} there exists
uniform upper bound $T_{\mathcal{C}}>0$, such that $\forall \tilde{K},\hat{K} \in \mathcal{C}$ ensemble
\eqref{enrb} can be steered from $\tilde{K}$ to $\hat{K}$ in time $T \leq T_{\mathcal{C}}$. \hfill $\qed$
\end{prop}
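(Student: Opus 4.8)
The plan is to derive the uniform attainability time bound $T_{\mathcal C}$ from the already-established global controllability by combining a local uniform estimate near each point with a compactness argument on the product manifold $(\mathbb{R}^3)^N$. First I would record that, under the hypotheses of Theorem~\ref{cerb}, the pair $\{\mathcal{E}_J(K),b_L\}$ is bracket generating at every point of $(\mathbb{R}^3)^N$, and moreover the constant brackets $V^0_{J,L},\dots,V^{3N-1}_{J,L}$ of \eqref{libr} span $\mathbb{R}^{3N}$ at each point (the determinant \eqref{detr} being nonzero). Since these particular brackets involve only iterated brackets up to a \emph{fixed} order $3N-1$, the number and depth of brackets needed is \emph{uniform} over $(\mathbb{R}^3)^N$. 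This is the key structural feature that makes a uniform time bound plausible: unlike the situation described in the introduction for refining discretizations of continual ensembles, here the bracket complexity does not grow from point to point.

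Next I would establish a \emph{local} uniform small-time controllability estimate. The standard quantitative form of the Rashevsky--Chow / ball-box argument (together with the orbit-theorem machinery in \cite{AgSch}) gives, for any point $K_0$ where a fixed finite family of brackets spans, a neighborhood $W_{K_0}$ and a time $\tau_{K_0}>0$ such that every point of $W_{K_0}$ is reachable from every other point of $W_{K_0}$ in time at most $\tau_{K_0}$, using admissible controls valued in $U=[-\beta,\beta]$. Here I must be a little careful, because the drift $\mathcal{E}_J(K)$ is present; but the recurrence property from Proposition~\ref{Poin}, exactly as exploited in Proposition~\ref{lobon}, lets one neutralize the drift by waiting for near-returns, and this waiting time can be controlled locally. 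The upshot is a cover $\{W_{K_0}\}$ of the compact set $\mathcal C$ with associated local times.

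Then I would run the compactness-plus-chaining step. Extract a finite subcover $W_{K_1},\dots,W_{K_M}$ of $\mathcal C$, with local times $\tau_1,\dots,\tau_M$. Because $(\mathbb{R}^3)^N$ is globally controllable, one can, for each pair of subcover centers, fix one admissible trajectory connecting them; let $T_0$ be the maximum over this finite set of connecting times. Any two points $\tilde K,\hat K\in\mathcal C$ lie in some $W_{K_i},W_{K_j}$ respectively, and can be steered to the centers in times $\le\tau_i,\tau_j$ and the centers joined in time $\le T_0$, yielding a total time bounded by $T_{\mathcal C}:=T_0+\max_i\tau_i+\max_j\tau_j$, independent of the chosen pair.

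The hardest part will be the \emph{local uniform} estimate in the presence of the drift: one must show that the waiting times supplied by recurrence, as well as the small-time reachability radii, can be chosen uniformly over a neighborhood rather than pointwise. The cleanest route is probably to bound the relevant bracket determinants \eqref{detr} away from zero on a neighborhood (using continuity of the $V^m_{J,L}$ in $K$ and compactness), which gives uniform ball-box constants, and then to handle the drift by noting that on the compact invariant spheres $B_r$ the recurrence return times are bounded in terms of $r$ via the volume of $B_r$ relative to the neighborhood size. I would also need to check that enlarging $r$ to contain $\mathcal C$ keeps all these constants finite, which it does since $\mathcal C$ is compact and hence contained in some $B_r$. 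Everything else is a routine finite maximization.
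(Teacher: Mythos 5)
Your compactness-and-chaining skeleton is fine, but the local lemma it rests on---that each point $K_0$ has a neighborhood $W_{K_0}$ and a time $\tau_{K_0}$ such that any point of $W_{K_0}$ can be steered to any other within time $\tau_{K_0}$---is not established by the tools you invoke, and this is a genuine gap. Ball-box/quantitative Rashevsky--Chow estimates apply to \emph{driftless} (symmetric) systems; for the affine system $\dot K=\mathcal{E}_J(K)+b_L u$ the spanning of $\mathbb{R}^{3N}$ by the constant brackets $V^0_{J,L},\dots,V^{3N-1}_{J,L}$ of \eqref{libr} yields only accessibility (attainable sets with nonempty interior), not two-sided local controllability: bracket directions obtained through the drift cannot be realized with both signs in small time. (Incidentally, since the $V^m_{J,L}$ are constant fields, ``bounding the determinant \eqref{detr} away from zero on a neighborhood by continuity in $K$'' is vacuous---the determinant does not depend on $K$ at all.) Your fallback---neutralizing the drift by recurrence with ``locally controlled waiting times''---also fails: Poincar\'e recurrence (Proposition~\ref{Poin}) is a measure-theoretic statement with no pointwise, let alone locally uniform, bound on return times. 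Concretely, the Euler field of a single asymmetric body is integrable on each momentum sphere; near the separatrices the periods of the closed orbits blow up, and points on a separatrix lie on heteroclinic orbits that never return, so no volume/Kac-type estimate of the form ``return time bounded via the volume of $B_r$ relative to the neighborhood size'' can hold uniformly on a neighborhood of such points---that kind of estimate controls at best a mean return time over the returning points, not a supremum.

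The paper's proof bypasses local controllability entirely and is much shorter: by Filippov's theorem a minimal attainability time $T(\tilde K,\hat K)$ exists for each pair; by global controllability (Theorem~\ref{cerb}) and Sussmann's normal attainability \cite{Sus}, $\hat K$ is \emph{normally} attainable from $\tilde K$ in the slightly larger time $T(\tilde K,\hat K)+1$, and normality forces the endpoint map to be submersive, so every $y$ near $\hat K$ is reachable from every $x$ near $\tilde K$ within that same time. This gives joint openness, in the pair $(x,y)$, of the bounded-time attainability relation, and a finite subcover of the compact set $\mathcal C\times\mathcal C$ finishes the argument---no chaining through centers and no two-sided local controllability are needed. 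If you want to salvage your outline, replace your local lemma by exactly this normal-attainability openness statement applied at each pair $(\tilde K,\hat K)$; it is precisely the local upper bound on minimal time that your construction implicitly requires, and it is the step that recurrence and ball-box estimates cannot supply in the presence of the drift.
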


\begin{proof}
Existence of minimal attainability $T(\tilde{K},\hat{K})$ time for each couple $\tilde{K},\hat{K}$ is part of classical Filippov theorem. Being the system
globally controllable one can  conclude  that $\hat{K}$ is {\it normally attainable} (\cite{Sus}) from $\tilde{K}$ in
the greater time $T(\tilde{K},\hat{K})+1$. Then each point of a small neighborhood of $\hat{K}$ is attainable from any point in small neighborhood of $\tilde{K}$, or, equivalently, that for any $(x,y)$ in a small neighborhood of $(\tilde{K},\hat{K})$, $y$ can be reached from $x$ in time $T(\tilde{K},\hat{K})+1$. By compactness, one can choose a finite cover by such neighborhoods of $\mathcal{C}\times\mathcal{C}$, implying the proposition.
%
\end{proof}

\section{Continual ensemble of control-linear system: model example}
\label{toym}

\subsection{Problem setting and controllability criterion}

We elaborate our approach to approximate controllability of continual ensembles on a simple model  with  $2$ controls:
\begin{eqnarray}\label{tm1}
    \dot{x}=u, \ \dot{y}=v, \ \dot{z}^\theta=f^\theta(x)v, \\
    x(0)=y(0)=z^\theta(0)=0. \label{tmic}
\end{eqnarray}
The ensemble is  constituted by control-linear systems whose  right-hand side is spanned by the vector fields
\begin{equation}\label{XY}
X=\frac{\partial}{\partial x}, \  Y^\theta=\frac{\partial}{\partial y} + f^\theta(x)\frac{\partial}{\partial z^\theta}, \ \theta \in \Theta .
\end{equation}


One proceeds under Assumptions \ref{unal},\ref{unl2}, in particular $f^\theta(x)$ is analytic in $x$.  We set a slightly modified \\
{\tt Approximate ensemble controllability problem.} {\it Given  $T>0$  and a target function $\hat{z}(p) \in L_\infty(\Theta)$  and $\eps >0$ {\it does there exist  $\theta$-independent controls $u(\cdot), v(\cdot) \in L_2[0,T]$, such that for the trajectory, driven by} $u(\cdot), v(\cdot)$ there holds:}
\begin{equation}
    x(T)=y(T)=0, \
    \int_{\Theta}\|z^\theta(T)-\hat{z}(\theta)\|^2d\theta \leq \eps . \label{zeps}
\end{equation}
Note that, on the contrast to the previous problem setting,  we ask for exact controllability  in coordinates $x,y$.

For simple   model \eqref{tm1}-\eqref{tmic} the trajectory can be computed explicitly:
\begin{eqnarray}
  x(t)=U(t)=\int_0^tu(\tau)d\tau, \ y(t)=V(t)=\int_0^t v(\tau)d\tau, \nonumber \\
 z^\theta(t)=\int_0^tf^\theta(U(\tau))v(\tau)d\tau=\int_0^t f^\theta(U(\tau))dV(\tau).  \label{zexp}
\end{eqnarray}





Consider Taylor expansion for  $f^\theta$ in $x$ at $0$:
\begin{equation}\label{tay}
  f^\theta(x)=\sum_{m=1}^\infty a_m(\theta)x^m, \ a_m(\theta)=\frac{1}{m!}\left.\frac{\partial^m f^\theta}{\partial x^m}\right|_{x=0}.
\end{equation}



The following condition is central for the controllability  of the ensemble \eqref{tm1}-\eqref{tmic}.

\begin{defn}[\it Lie algebraic span condition] The functions $\left(a_m(\theta)\right)_1^\infty$, defined by \eqref{tay},    span dense subspace of $L_2(\Theta)$:
\begin{equation}\label{fuclos}
\overline{span}\{a_m(\theta), \ m=1, \ldots \}=L_2(\Theta). \ \qed
\end{equation}
\end{defn}


\begin{rem} We talk about Lie algebraic condition, since the functions $a_m(\theta)$ are
$z^\theta$-components of  the evaluations  at
 $x=0$ of  the iterated Lie brackets  $\frac{1}{m!}\left((\ad X)^m Y^\theta\right)$ of the vector fields \eqref{XY}.   $\qed$
\end{rem}

\begin{mthm}\label{lifr}
 Ensemble \eqref{tm1} is time-$T$ approximately controllable for each $T>0$
if and only if the Lie algebraic span condition \eqref{fuclos} holds. \hfill $\qed$
\end{mthm}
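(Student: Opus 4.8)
The starting point is the explicit trajectory \eqref{zexp}, which shows that at the final moment only the planar loop $\tau\mapsto(U(\tau),V(\tau))$ with $U(T)=V(T)=0$ matters, and only through its moments
\[
c_m:=\int_0^T U(\tau)^m\,dV(\tau),\qquad m\ge 1 .
\]
Substituting the Taylor expansion \eqref{tay} into \eqref{zexp} and integrating term by term (legitimate as long as $\|U\|_\infty<\rho$, since the series then converges uniformly in $\tau$ and, by Assumptions \ref{unal}--\ref{unl2}, uniformly in $\theta$) gives the key identity $z^\theta(T)=\sum_{m\ge1} a_m(\theta)\,c_m$. Thus the reachable family of terminal functions is, up to closure, the set of all series $\sum_m c_m a_m(\theta)$ produced by admissible loops, and the whole problem reduces to understanding which moment sequences $(c_m)$ planar loops can realize.

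\emph{Necessity} I would dispatch first, as it needs no construction. If \eqref{fuclos} fails, then $\overline{\mathrm{span}}\{a_m\}$ is a proper closed subspace of $L_2(\Theta)$, so there is a nonzero $\phi\in L_2(\Theta)$ with $\int_\Theta\phi(\theta)a_m(\theta)\,d\theta=0$ for every $m$. Set $F(x):=\int_\Theta\phi(\theta)f^\theta(x)\,d\theta$; Assumptions \ref{unal}--\ref{unl2} make $F$ analytic in $x$, and its Taylor coefficients at $0$ are exactly the vanishing integrals $\int_\Theta\phi\,a_m$, so $F\equiv 0$. Pairing $\phi$ against \eqref{zexp} and using Fubini yields
\[
\int_\Theta \phi(\theta)\,z^\theta(T)\,d\theta=\int_0^T F\bigl(U(\tau)\bigr)\,dV(\tau)=0
\]
for \emph{every} admissible control (note this sidesteps any convergence hypothesis on $U$). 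Hence the reachable set lies in the hyperplane $\phi^\perp$, and a target with $\langle\phi,\hat z\rangle\neq 0$ cannot be approached closer than its positive distance to $\phi^\perp$, contradicting approximate controllability.

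\emph{Sufficiency} is where the work lies. Assume \eqref{fuclos}. Given $\hat z\in L_\infty(\Theta)\subset L_2(\Theta)$ and $\eps>0$, density furnishes $N$ and coefficients $c_1,\dots,c_N$ with $\|\hat z-\sum_{m=1}^N c_m a_m\|_{L_2(\Theta)}<\eps/2$. It then suffices to construct, for every small $r>0$, a loop with $\|U\|_\infty\le r$ and $U(T)=V(T)=0$ realizing these $N$ moments \emph{exactly}, $\int_0^T U^m\,dV=c_m$ for $m\le N$. By Green's theorem $c_m=m\int U^{m-1}\,d\mu$, where $\mu$ is the signed ``vertical mass'' of the loop distributed along the $U$-axis; prescribing $c_1,\dots,c_N$ is therefore a moment problem for a measure supported in $[-r,r]$, solvable by placing finitely many thin sub-loops at distinct heights and inverting a Vandermonde system. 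This can be arranged so that the total variation $W:=\mathrm{Var}(V)$ grows only polynomially, $W\le C(N,\hat z)\,r^{-N}$.

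The decisive estimate is for the uncontrolled tail. Cauchy's inequality on $B_\rho$, uniform in $\theta$ by Assumptions \ref{unal}--\ref{unl2}, gives $\|a_m\|_{L_2(\Theta)}\le C'\,r_0^{-m}$ for some fixed $r_0>0$; meanwhile the single bound $|c_m|\le\|U\|_\infty^{\,m}\,W\le C\,r^{\,m-N}$ holds for all $m$, so for $r<r_0$
\[
\Bigl\|\sum_{m>N}c_m a_m\Bigr\|_{L_2(\Theta)}\le\sum_{m>N}|c_m|\,\|a_m\|_{L_2}\le C''\sum_{k\ge1}(r/r_0)^k\xrightarrow[r\to0]{}0 .
\]
Combining, $\|z^\theta(T)-\hat z\|_{L_2(\Theta)}<\eps$ for $r$ small, while $x(T)=U(T)=0$ and $y(T)=V(T)=0$ by closedness of the loop, and reparametrising over $[0,T]$ handles an arbitrary horizon $T$. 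The main obstacle is exactly this balance in the sufficiency half: shrinking $\|U\|_\infty$ kills the high Taylor modes but inflates the total variation needed to pin the low moments, and the argument hinges on the quantitative moment-realization with $W=O(r^{-N})$ so that $r^{\,m-N}$ still decays for $m>N$. The Vandermonde and total-variation bookkeeping, together with the term-by-term integration where $\|U\|_\infty$ might a priori exceed $\rho$ (circumvented in necessity by working with $F$ directly), are the points demanding care.
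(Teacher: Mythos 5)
Your proposal is correct, and it follows the same overall strategy as the paper --- reduce via \eqref{zexp} to a moment problem $z^\theta(T)=\sum_m a_m(\theta)c_m$, approximate $\hat z$ by a finite span of the $a_m$, realize the low moments with $\|U\|_\infty$ small at the price of high gain in $v$, and kill the tail with Cauchy estimates $\|a_m\|\lesssim r_0^{-m}$ against a geometric series --- but the two key steps are implemented by genuinely different devices. For the low moments the paper works with the fixed family $U(t)=t^2-t$, $v_r=P_{2r}$ (Legendre polynomials), so that the moment matrix $\gamma_{mr}=\int_0^1 U^m v_r\,dt$ is lower triangular with nonzero diagonal; the scaling $U\mapsto\eps U$, $v_r\mapsto\eps^{-r}v_r$ then yields $\gamma^\eps_{mr}=\eps^{m-r}\gamma_{mr}$, the coefficients are set by $y_r=c_r/\gamma_{rr}$, and all off-diagonal and high-order terms are swept into an $O(\eps)$ perturbation bounded exactly by your geometric-series mechanism. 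Your Green's-theorem construction instead prescribes the first $N$ moments \emph{exactly} by thin rectangular sub-loops and a Vandermonde inversion, which is cleaner in that no cross terms among $m\le N$ survive; what it costs you is the quantitative claim $W=O(r^{-N})$, which you assert rather than verify --- it does hold (take nodes $u_i=r\xi_i$ with fixed distinct $\xi_i$, so the Vandermonde in the $\xi_i$ is fixed and the solution scales like $r^{-(N-1)}$, giving heights $O(r^{-N})$ for widths $O(r)$), and this is exactly the analogue of the paper's $\eps^{-r}$ gains, so the balance $r^{m}\cdot r^{-N}$ decaying for $m>N$ is sound. Your necessity argument is also different from, and strictly more complete than, the paper's: the paper concludes $\int_\Theta\nu(\theta)z^\theta(1)\,d\theta=0$ from the series representation \eqref{mom1}, which was derived under the standing assumption that $U$ is small enough for \eqref{serfu} to converge, whereas your device of integrating $\theta$ out first --- $F(x)=\int_\Theta\phi(\theta)f^\theta(x)\,d\theta$ is analytic with vanishing Taylor coefficients, hence constant, so $\int_0^T F(U)\,dV=F(0)V(T)=0$ --- rules out \emph{all} admissible controls, not only small-amplitude ones, thereby closing a small gap in the paper's necessity as written. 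Two cosmetic points: your sub-loops sit at distinct \emph{abscissae} along the $U$-axis (you wrote ``heights''), and the connecting segments should run along $dV=0$ so they contribute nothing to the moments; both are trivially arranged.
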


Rescaling of the time and control $t \to k^{-1}t, \ (u,v) \to (ku,kv), \ k \in \mathbb{R}_+,$ leaves  \eqref{tm1} invariant, therefore we can assume $T=1$.

By  \eqref{zexp}:
\begin{equation}\label{out}
    z^\theta(1)=\int_0^1 f^\theta(U(t))v(t)dt.
\end{equation}

One needs to construct  functions $U(t),v(t)$ such that $U(1)=x(1)=V(1)=y(1)=0$ and $z^\theta(1)$, defined by \eqref{out},  would satisfy the inequality \eqref{zeps} for $T=1$.

To accomplish this we proceed by a variant of moments method.

Assume from now on the  magnitude of the function $U(t)$ to be small, so that the series
\begin{equation}\label{serfu}
  f^\theta(U(t))=\sum_{m=1}^\infty a_m(\theta)(U(t))^m
\end{equation}
will be converging.

We will seek $v(t)$ as a linear combination:
  $v(t)=\sum_{r=1}^R y_r v_r(t)$;
 integer parameter $R$ depends on the rate of approximation and will be specified in a moment.

For the controls defined one derives from the expansion \eqref{serfu}:
\begin{equation}\label{mom1}
z^\theta(1)=\sum_{m=1}^\infty  a_m(\theta)\sum_{r=1}^R\gamma_{mr}y_r.
\end{equation}
where
\begin{equation}
\gamma_{mr}= \int_0^1
\left(U(t)\right)^m v_r(t)dt . \label{dga}
\end{equation}

 If  Lie algebraic span condition is satisfied,
then for each $\eps_1>0$ one can find a finite linear combination  $\sum_{r=1}^R c_m a_m(\theta) $, such that
\begin{equation}\label{apreps1}
 \|\hat z(\theta)-\sum_{r=1}^R c_r a_r(\theta) \|_{L_2(\Theta)} < \eps_1.
\end{equation}

This sets the number $R$, which  depends on the rate of approximation $\eps_1: \ R=R(\eps_1)$.


Our goal is to choose $U(t), v_r(t)$ in such a way that the equation
\begin{equation*}
\sum_{m=1}^\infty  \left(\sum_{r=1}^R\gamma_{mr}y_r\right)a_m(\theta)=\sum_{r=1}^R c_r a_r(\theta)
\end{equation*}
with the coefficients $\gamma_{mr}$, defined by
\eqref{dga}, would be  {\it approximately solvable} with respect to $y_r$.

This fact, {\tt proved in Appendix}, completes the proof of sufficiency part of the Theorem \ref{lifr}.

Now we  prove the {\it necessity}.
If the closure in  \eqref{fuclos} is a proper subspace in $L_2(\Theta)$ take
an element $\nu(\theta)$ orthogonal to the closure: $\int_\Theta \nu(\theta)a_m(\theta)d\theta=0$.
By \eqref{mom1} $\int_\Theta \nu(\theta)z^\theta(1)d\theta=0$ and hence the system can not be  approximately steered to any target function $\hat z(\cdot)$, which is not orthogonal to $\nu (\cdot)$. \hfill $\qed$

\section{Controllability of ensembles of driftless (control-linear) systems. Ensemble version of Rashevsky-Chow theorem}
\label{rcens}

\subsection{Formulation of the result}

Consider the ensemble  of control-linear systems
\begin{equation}\label{clz}
\frac{d}{dt}x^\theta(t)=
\sum_{j=1}^{r}f_j^\theta(x^\theta)u_j(t).
\end{equation}

We study controllability of the ensemble for the case where the parameter $\theta$ enters the dynamics, while the initial data $\tilde x$ and the target $\hat x$ are $\theta$-independent.
Let $d(x,y)$ be a Riemannian distance on $M$.

\begin{defn}\label{acodef}
The ensemble \eqref{clz} is time-$T$
$L_1$-approximately steerable from $\tilde x$ to $\hat x$, if $\forall \varepsilon >0$ there exists a control
$u(\cdot)$, which steers  in time $T$  the ensemble \eqref{clz}  from  $\tilde x$ to $x^\theta(T)$, and:
\begin{equation}\label{fp_contlin}
\int_\Theta d(x^\theta(T),\hat x) d\theta < \eps .\ \qed.
\end{equation}
\end{defn}

\begin{rem}
For technical reasons we opt here for $L_1(\Theta)$-approxi\-mations of the target
on the contrast to  $L_2(\Theta)$-approximations invoked in Section~\ref{toym}. $\qed$
\end{rem}

Let assumptions \ref{unal},\ref{unl2}  hold.


\begin{defn}\label{laspc}
Lie algebraic span condition holds for \eqref{clz},
if  $\forall x \in M$ the evaluations at $x$ of the iterated Lie brackets
of the vector fields $f^\theta_\alpha(x)$
\begin{equation}\label{bralf}
X^\theta_\alpha(x)= [f^\theta_{\alpha_1},[f^\theta_{\alpha_2},[ \ldots ,f^\theta_{\alpha_N}]\ldots ]](x), \ \theta \in \Theta ,
\end{equation}
 span dense subspace
  of the Banach space $L_1(\Theta,T_x M). \  \qed$
\end{defn}

\begin{mthm}[ensemble controllability criterion]\label{trc} Let the  assumptions \ref{unal},\ref{unl2}  and the Lie algebraic span condition (Definition \ref{laspc}) hold  for \eqref{clz}.
      Then for each couple $(\tilde{x},\hat{x})$ and each $T>0$  the ensemble \eqref{clz} is time-$T$ %
      $L_1$-ap-pro\-ximately steerable from $\tilde x$ to $\hat{x}. \  \square$
\end{mthm}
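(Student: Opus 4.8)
The plan is to adapt the classical method of composing flows behind the Rashevsky--Chow theorem to the infinite-dimensional fiber bundle whose fiber over $x$ is $L_1(\Theta,T_xM)$, as outlined in the Introduction. Two preliminary reductions simplify matters. First, since \eqref{clz} is driftless, the time rescaling $t\mapsto t/\lambda$, $u\mapsto\lambda u$ leaves the system invariant, so time-$T$ steerability is equivalent for all $T>0$ and we may fix $T=1$. Second, it suffices to prove a \emph{local} statement: from a configuration $x^\theta$ that is $L_1$-close to a constant ensemble $x^\theta\equiv x_0$ we can steer, with arbitrarily small $L_1$-error, to a configuration $L_1$-close to the constant ensemble $x^\theta\equiv x_1$, for every $x_1$ in a neighborhood of $x_0$. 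Indeed, choosing a path $\gamma:[0,1]\to M$ joining $\tilde x$ to $\hat x$, partitioning it into short segments and chaining such local steps (keeping the ensemble approximately constant throughout) yields the global assertion, provided the errors accumulated over the finitely many steps are controlled.

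For the local step, I would first observe that a \emph{constant} $\theta$-independent control $u=(u_1,\dots,u_r)$ drives the ensemble along the section $\theta\mapsto f^\theta_u:=\sum_j u_j f^\theta_j$ of the fiber bundle, so that its flow is the fiberwise flow of this vector field. The key algebraic fact is that the commutator of flows reproduces, to second order, the fiberwise Lie bracket: for constant controls $u,w$,
\[
e^{-s f^\theta_w}\,e^{-s f^\theta_u}\,e^{s f^\theta_w}\,e^{s f^\theta_u}
= e^{\,s^2[f^\theta_u,\,f^\theta_w]+o(s^2)},
\qquad [f^\theta_u,f^\theta_w]=\sum_{j,k}u_jw_k[f^\theta_j,f^\theta_k].
\]
Iterating these commutators and composing, via the chronological (Campbell--Hausdorff) calculus, realizes approximately the flow generated by any prescribed finite linear combination $\sum_\alpha c_\alpha X^\theta_\alpha$ of the iterated brackets of Definition \ref{laspc}, with the signs of the $c_\alpha$ recovered by reversing the order of the commutators. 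By the Lie algebraic span condition, the \emph{constant} displacement section $\theta\mapsto v$, where $v\in T_{x_0}M$ points from $x_0$ towards $x_1$ (a well-defined element of $L_1(\Theta,T_{x_0}M)$ since $\Theta$ has finite measure), can be approximated in $L_1(\Theta,T_{x_0}M)$, to within any $\eta>0$, by such a combination $\sum_\alpha c_\alpha X^\theta_\alpha(x_0)$. Running the corresponding composition of flows for a suitably small overall time then moves the ensemble by approximately $v$, uniformly in $\theta$ up to the $L_1$-error $\eta$.

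The main obstacle is making these approximations \emph{uniform in} $\theta$ and summable over the steps. Here Assumptions \ref{unal} and \ref{unl2} are essential: uniform analyticity in $x$ gives convergent flow expansions with remainder bounds that are uniform on the complex $\rho$-neighborhood $B_\rho(M)$, and continuity of $\theta\mapsto X^\theta$ on the compact $\Theta$ upgrades these to bounds uniform in $\theta$. This controls both the $o(s^2)$ commutator remainders and the product-formula error in realizing $\sum_\alpha c_\alpha X^\theta_\alpha$, integrated over $\Theta$. One must also verify that the span condition, which is stated pointwise at $x_0$, continues to apply when the ensemble has spread slightly off the constant configuration: the same uniform analyticity shows that $X^\theta_\alpha(x^\theta)$ is $L_1(\Theta,TM)$-close to $X^\theta_\alpha(x_0)$ when $x^\theta$ is $L_1$-close to $x_0$, so the realizable directions depend continuously on the configuration. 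Finally I would choose the number of brackets, the commutator time scale and the number of path-segments in this order, so that the total of the span-approximation errors and the flow-composition remainders over the finitely many local steps stays below the prescribed $\eps$; this bookkeeping, rather than any single estimate, is the delicate part of the argument.
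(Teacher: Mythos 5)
Your proposal is correct in substance but takes a genuinely different route from the paper's proof. You run the classical commutator-of-flows (product-formula) argument fiberwise: piecewise-constant $\theta$-independent controls generate group commutators $e^{-sf^\theta_w}e^{-sf^\theta_u}e^{sf^\theta_w}e^{sf^\theta_u}$ whose displacements realize the iterated brackets $X^\theta_\alpha$ up to remainders uniform in $\theta$, the span condition of Definition~\ref{laspc} is invoked with \emph{constant} coefficients at one point per short segment, and the global statement is assembled by chaining local steps along a path, with the error bookkeeping deferred. The paper instead splits the proof in two stages: Proposition~\ref{gloec} steers the \emph{extended} ensemble \eqref{ext_ens} globally, by approximating a guiding vector field $Y$ in $L_1(\Theta)$ by sections $\sum_\alpha v_\alpha(x)X^\theta_\alpha(x)$ via a partition of unity (Lemma~\ref{apprY}) and controlling the entire trajectory with a single Gronwall--Fubini estimate; Proposition~\ref{liex} then eliminates the extended controls one bracket at a time, by feeding fast-oscillating high-gain controls \eqref{econ}--\eqref{uveps} into the variational formula of Lemma~\ref{varf} (chronological calculus) and passing to $L_1(\Theta)$ via pointwise-in-$\theta$ convergence, equiboundedness and dominated convergence. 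Both arguments rest on the same two pillars — density of the span with $\theta$-independent coefficients in $L_1(\Theta,T_xM)$, and remainder bounds uniform in $\theta$ coming from Assumptions~\ref{unal},~\ref{unl2} and Cauchy estimates — so your uniformity discussion is sound and parallels the paper's. What each buys: your route is more elementary (no chronological calculus, explicit piecewise-constant controls), but the number of switchings blows up with bracket depth and accuracy, and the per-step error propagation that you acknowledge as ``delicate'' is exactly what the paper avoids by doing Gronwall once along the guiding trajectory and by the clean inductive structure of the Lie-extension step; the paper's route also produces explicit trigonometric control formulas that extend to systems with drift. One caveat you should make explicit: your per-step errors are only $L_1(\Theta)$-small, not uniform in $\theta$, so for a small-measure set of $\theta$ the trajectories $x^\theta$ may leave the compact region where your uniform Lipschitz and Cauchy bounds are valid; the paper faces the same issue and resolves it by working ``as if $M$ were a bounded connected subset of $\mathbb{R}^n$'' so that the Gronwall constant is global, and your chaining argument needs the analogous hypothesis (or a stopping-time truncation) to close.
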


\begin{rem}
For $\Theta$ being finite ($|\Theta|=N$), the  space $L_1(\Theta,T_x M)$ becomes finite-dimensional, isomorphic to $T_xM^N$, and the Lie algebraic span condition is equivalent to the bracket generating condition on $M^N$. In this case stronger  result on {\it exact} ensemble controllability holds (see Section \ref{fecl}).

For $|\Theta|=1$, i.e. for  single system, one gets Rashevsky-Chow theorem.$\qed$
\end{rem}

\begin{rem}
The assumptions, invoked in the formulation, can be weakened. We trust that similar result can be established for the vector fields $f^\theta_{j}$, which are just $C^\infty$-smooth in $x$, as well as the requirement of continuity of $f^\theta_j$  in $\theta$ can be loosened.  $\qed$
\end{rem}

\begin{rem}\label{rash} There was a number of publications (\cite{DuSa,Led,SaMa}) which presented  variants of approximate Rashevsky-Chow theorem in infinite dimension. All those results regard control-linear systems $\dot{y}=\sum_{i=1}^r f^i(y)u_i(t)$,  $y \in E$ in infinite-dimensional vector space $E$ and, roughly speaking, state that whenever approximate bracket generating property holds, i.e. the iterated Lie brackets of the vector fields $g^1, \ldots , g^r$ {\it evaluated at each point of the infinite-dimensional space} span a dense subspace of $E$, then the system is approximately controllable.

     The controlled ensemble  \eqref{clz} can be seen as a control-linear  system in a space $E$ of the functions $x(\theta)=x^\theta$.
One can introduce vector fields on  this space, define the Lie brackets in standard way, and apply the results, just mentioned, to get a controllability criterion.

This criterion would require verification of approximate bracket generating property {\it at each "point" $x(\theta)$ of the functional space}, or the same density of the span of
the iterated Lie brackets \eqref{bralf}, evaluated "along"  each $x(\theta)$.

This means verification of  a vast set of conditions, "indexed" by the elements of a functional space, and is in strong contrast with Theorem \ref{trc}, which just requires verification of bracket generating property at the points of finite-dimensional manifold $M$, or , one could say,  at the constant functions $x(\theta) \equiv \bar x \in M. \ \qed$
\end{rem}

  The rest of the Section is dedicated to the {\it proof of Theorem} \ref{trc}, which is based on  an infinite-dimensional version of the method of Lie extensions.

  According to the method we first establish the possibility to  steer an {\it extended ensemble}
   \begin{equation}\label{ext_ens}
\frac{d}{dt}x^\theta(t)=
\sum_{\alpha \in A}X^\theta_\alpha(x)v_\alpha(t),
\end{equation}
   which involves the vector fields  $X^\theta_\alpha(x)$,  defined by \eqref{bralf}, and  a high-dimensional {\it extended control} $(v_\alpha)$.
   Then we demonstrate how the action of the extended control can be approximated by the action of a small-dimensional original control.
\begin{rem}
Without lack of generality we assume the vector fields $X_j(x)=f_j^\theta(x)$, which define the dynamics of the original ensemble \eqref{clz}, to enter also all the  extended ensembles we invoke. $\Box$
\end{rem}

\subsection{Steering an extended ensemble}

\begin{prop}\label{gloec}
Under the assumptions of the Theorem for each $\tilde x , \hat x \in M$, and each $\eps >0, T>0$
there exists a finite set of multi-indices $A_\eps =\{(\alpha_1, \ldots , \alpha_N)\}$,
and an extended control $\left(v_\alpha(t)\right)_{\alpha \in A},$ which steers in time $T$  the extended ensemble \eqref{ext_ens}
from $\tilde x$ to $x^\theta(T)$, so that \eqref{fp_contlin} holds. $\qed$
\end{prop}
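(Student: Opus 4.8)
The plan is to reduce the global steering of the extended ensemble \eqref{ext_ens} to tracking, for all $\theta$ simultaneously, a single reference path of \emph{constant} configurations joining $\tilde x$ to $\hat x$. Since $\tilde x$ and $\hat x$ are $\theta$-independent and $M$ is connected, I would first fix a smooth curve $\gamma\colon[0,T]\to M$ with $\gamma(0)=\tilde x$ and $\gamma(T)=\hat x$. The idea is then to choose the extended control $(v_\alpha(t))$ so that the realized velocity field $\sum_{\alpha\in A}v_\alpha(t)X^\theta_\alpha(\gamma(t))$ approximates, in $L_1(\Theta,T_{\gamma(t)}M)$, the \emph{$\theta$-independent} vector $\dot\gamma(t)$; if this held exactly for every $t$ and every trajectory stayed on $\gamma$, then $x^\theta(t)\equiv\gamma(t)$ and \eqref{fp_contlin} would hold with zero error.

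\textbf{Realizing the reference velocity.} The Lie algebraic span condition (Definition \ref{laspc}) provides, at each point $\gamma(t)$, a finite linear combination of the brackets \eqref{bralf} whose $L_1(\Theta)$-distance to the constant field $\dot\gamma(t)$ is as small as desired; the constant field lies in $L_1(\Theta,T_{\gamma(t)}M)$ because $\Theta$ is compact (Assumption \ref{unl2}). To turn this pointwise statement into a single finite index set $A_\eps$ and measurable controls valid along the whole curve, I would use that, by Assumptions \ref{unal} and \ref{unl2}, each $t\mapsto X^\theta_\alpha(\gamma(t))$ is continuous and the family is uniformly bounded on the compact set $[0,T]\times\Theta$, so that $t\mapsto X^{\,\cdot}_\alpha(\gamma(t))$ is continuous into $L_1(\Theta,T_{\gamma(t)}M)$. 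A fixed admissible combination at $\gamma(s)$ therefore keeps its $L_1(\Theta)$-accuracy on a neighbourhood of $s$; covering the compact image $\gamma([0,T])$ by finitely many such neighbourhoods and taking the union of the corresponding finite index sets yields a single finite $A_\eps$ and piecewise-constant controls $v_\alpha(t)$ with $\int_\Theta\bigl\|\sum_{\alpha\in A_\eps}v_\alpha(t)X^\theta_\alpha(\gamma(t))-\dot\gamma(t)\bigr\|\,d\theta<\eps_1$ uniformly in $t$.

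\textbf{Controlling the spread.} With these controls fixed, I would estimate the $L_1(\Theta)$-deviation $\int_\Theta d(x^\theta(t),\gamma(t))\,d\theta$ of the genuine trajectories from the reference. Decomposing the pointwise growth rate into a Lipschitz term $\sum_\alpha|v_\alpha(t)|\,\mathrm{Lip}(X^\theta_\alpha)\,d(x^\theta,\gamma)$ and the velocity-approximation defect, a Gronwall argument on a compact neighbourhood of $\gamma([0,T])$ (where the brackets are Lipschitz, by Assumption \ref{unal}) bounds the terminal deviation by $C_1\,e^{C_2}\eps_1$, where $C_2$ is governed by the total control effort $\int_0^T\sum_\alpha|v_\alpha|\,dt$. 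A standard maximality argument keeps the trajectories inside the neighbourhood where the Lipschitz bounds were taken, and choosing $\eps_1$ small then forces \eqref{fp_contlin}.

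\textbf{Main obstacle.} The delicate point is precisely this last estimate: density in $L_1(\Theta,T_xM)$ gives no control on the magnitude of the coefficients $v_\alpha$ realizing a prescribed accuracy $\eps_1$, so a priori the Gronwall factor $e^{C_2}$ may deteriorate as $\eps_1\to0$ and the product $e^{C_2}\eps_1$ need not tend to zero. The crux is therefore to approximate the constant field $\dot\gamma(t)$ with \emph{bounded} control effort while improving the accuracy, i.e.\ by \emph{enlarging} the finite bracket set $A_\eps$ rather than inflating the coefficients. I expect this to be handled as in the model problem of Section \ref{toym}, through a moments-type construction: one selects the controls so that the associated moment data are well conditioned, recasting the velocity-matching as an approximately solvable linear system in the coefficients. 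The freedom to take $A_\eps$ as large as needed, together with the analyticity of the fields, should supply the bounded-coefficient approximations that close the Gronwall loop.
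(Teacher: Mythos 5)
Your proposal is essentially the paper's own argument: the paper likewise fixes a reference trajectory $\bar x(t)$ of a smooth field $Y$ joining $\tilde x$ to $\hat x$, uses the Lie algebraic span condition pointwise together with continuity in $(x,\theta)$ and a compactness/partition-of-unity argument (Lemma~\ref{apprY}) to produce a single finite family $A_\eps$ with coefficients $v_\alpha$ satisfying \eqref{YvX} uniformly on a compact neighborhood of the curve, freezes these coefficients along the reference to get open-loop controls $\bar v_\alpha(t)=v_\alpha(\bar x(t))$, and closes with exactly your Gronwall estimate, arriving at $\int_\Theta\|x^\theta(T)-\bar x(T)\|\,d\theta\le \frac{\eps}{L_{X}}\left(e^{L_{X}T}-1\right)$. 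The only divergence is your final paragraph: the coefficient-size obstacle you flag is not handled in the paper by any moments-type construction (that device appears only in the model example of Section~\ref{toym}); the paper's proof of Proposition~\ref{gloec} simply runs Gronwall with the Lipschitz constant $L_{X}$ of the realized time-variant field --- a constant that in principle depends on $\eps$ through the chosen $v_\alpha$ --- without further comment, so on that point your attempt is, if anything, more explicit than the source.
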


Time and control rescaling $t \to k^{-1}t, \ v_\alpha \to k v_\alpha, \alpha \in A, \ k \in \mathbb{R}_+$  leaves the  control-linear ensemble \eqref{ext_ens}  invariant; therefore whenever controllability is established for some $T_0>0$, it holds for any $T>0$.


Now let us choose any $C^\infty$-smooth vector field $Y(x)$ on $M$ with a trajectory $\bar{x}(t)$, which satisfies the boundary conditions
\[\bar{x}(0)=\tilde{x}, \  \bar{x}(1)=\hat{x} . \]
Denote $\bar{\gamma}=\{\bar{x}(t)| \ t \in [0,1]\} \subset M$.

We prove the following technical Lemma.

\begin{lem}\label{apprY}
Under the assumptions of  Theorem~\ref{trc} there exists a pair of compact neighborhoods $\tilde V, V$ of $\bar \gamma$  $ (\tilde V   \supset \bar V)$ and for each $\eps>0$  a finite set of smooth functions $\left(v_\alpha(x)\right), \ \alpha \in A_\eps$ with supports, contained in $\tilde V$ and such that
\begin{equation}\label{YvX}
 \forall x \in \bar V: \ \left\|Y(x) -\sum_{\alpha \in A_\eps}v_\alpha(x)X^\theta_\alpha(x) \right\|_{L_1(\Theta)}< \eps .
\end{equation}
\end{lem}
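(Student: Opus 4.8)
The plan is to reduce the uniform, $x$-dependent approximation \eqref{YvX} to the \emph{pointwise} density supplied by the Lie algebraic span condition (Definition~\ref{laspc}), and then to glue the pointwise data together by a smooth partition of unity. Throughout I regard the bracket fields as maps valued in the fixed Banach space $L_1(\Theta, T_xM)$: for fixed $x$ the assignment $\theta \mapsto X^\theta_\alpha(x)$ is an element of $L_1(\Theta, T_xM)$, while the $\theta$-independent field $Y$ is viewed as the constant function $\theta \mapsto Y(x)$ lying in the same space. The sought coefficients $v_\alpha(x)$ must be $\theta$-independent, so the approximation is an approximation of the constant element $Y(x)$ by real-coefficient combinations of the $X^\theta_\alpha(x)$, uniformly in $x\in\bar V$.

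The first and enabling step is a continuity statement. By Assumption~\ref{unal} the radius $\rho$ of analyticity in $x$ is uniform in $\theta$; Cauchy estimates on a compact subset of $B_\rho(M)$ then bound the $x$-derivatives of the family $\{X^\theta_\alpha\}_\theta$ uniformly, so this family is equicontinuous in $x$. Combined with the pointwise continuity $\theta \mapsto X^\theta_\alpha(x)$ of Assumption~\ref{unl2}, equicontinuity upgrades separate continuity to \emph{joint} continuity of $(x,\theta)\mapsto X^\theta_\alpha(x)$. Since $\Theta$ is compact with finite Borel measure and the fibre norm varies continuously through the Riemannian metric, bounded convergence shows that $x \mapsto X^\theta_\alpha(x)$ is continuous as an $L_1(\Theta, T_xM)$-valued map, and likewise $x\mapsto Y(x)$. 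Hence, for \emph{any fixed finite} index set $A$ and fixed real constants $(c_\alpha)_{\alpha\in A}$, the scalar function $x \mapsto \big\| Y(x) - \sum_{\alpha\in A} c_\alpha X^\theta_\alpha(x)\big\|_{L_1(\Theta)}$ is continuous.

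Next I fix compact neighborhoods $\bar V \subset \mathrm{int}\,\tilde V$ of $\bar\gamma$ and localize. For each $x_0 \in \bar V$ the Lie algebraic span condition gives density of $\mathrm{span}\{X^\theta_\alpha(x_0)\}$ in $L_1(\Theta, T_{x_0}M)$, hence a finite set $A_{x_0}$ and constants $c^{x_0}_\alpha$ with $\big\| Y(x_0) - \sum_{\alpha\in A_{x_0}} c^{x_0}_\alpha X^\theta_\alpha(x_0)\big\|_{L_1(\Theta)} < \eps$. By the continuity above this strict inequality persists, with the \emph{same} finite set and the \emph{same} constants, on an open neighborhood $W_{x_0} \subset \tilde V$ of $x_0$. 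Extracting a finite subcover $W_{x_1},\dots,W_{x_k}$ of the compact set $\bar V$ and a smooth partition of unity $\{\phi_i\}$ subordinate to it ($\mathrm{supp}\,\phi_i\subset W_{x_i}\subset\tilde V$, $\phi_i\ge 0$, $\sum_i\phi_i\equiv 1$ on $\bar V$), I set $A_\eps = \bigcup_i A_{x_i}$ and define
\[
v_\alpha(x) = \sum_{i} \phi_i(x)\, c^{x_i}_\alpha ,
\]
with the convention $c^{x_i}_\alpha = 0$ for $\alpha \notin A_{x_i}$. These are smooth, $\theta$-independent, and supported in $\tilde V$, as required.

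Finally, using $\sum_i\phi_i(x)=1$ on $\bar V$ together with the triangle inequality and $\phi_i\ge 0$, for $x\in\bar V$ I would estimate
\[
\Big\| Y(x) - \sum_{\alpha\in A_\eps} v_\alpha(x) X^\theta_\alpha(x)\Big\|_{L_1(\Theta)} \le \sum_i \phi_i(x)\, \Big\| Y(x) - \sum_{\alpha\in A_{x_i}} c^{x_i}_\alpha X^\theta_\alpha(x)\Big\|_{L_1(\Theta)} < \eps ,
\]
since wherever $\phi_i(x)\neq 0$ one has $x\in W_{x_i}$, on which the $i$-th norm is $<\eps$. The main obstacle is precisely the passage from the pointwise density of Definition~\ref{laspc} to a locally uniform approximation that uses a fixed finite bracket set and fixed coefficients on a whole neighborhood; this is exactly where the $L_1(\Theta)$-continuity of $x\mapsto X^\theta_\alpha(x)$ — and hence the uniform analyticity (Assumption~\ref{unal}) and continuity in $\theta$ (Assumption~\ref{unl2}) — is indispensable. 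Once that continuity is secured, the compactness of $\bar V$ and the partition-of-unity gluing render the globalization routine.
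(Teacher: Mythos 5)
Your proof is correct and follows essentially the same route as the paper's: pointwise approximation of $Y(x_i)$ from the Lie algebraic span condition, persistence of the strict inequality (with frozen coefficients) on a neighborhood by continuity, a finite subcover of the compact $\bar V$, and gluing via a smooth partition of unity with the convexity estimate $\sum_i\phi_i\equiv 1$. The only difference is that you spell out the $L_1(\Theta,T_xM)$-continuity of $x\mapsto X^\theta_\alpha(x)$ (via Assumptions~\ref{unal},~\ref{unl2}) that the paper leaves implicit, which is a welcome addition rather than a deviation.
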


\begin{rem}
 The vector  $Y(x)$ in \eqref{YvX} is seen as constant vector-function of $\theta \in \Theta$.
\end{rem}

{\it To prove Lemma~\ref{apprY}} we  fix a compact neighborhood $\bar V$ of $\gamma$ such that at each point $x \in \bar V$  the Lie algebraic span condition \eqref{laspc} is satisfied. Then for each $\eps >0$ and each $x \in \bar V$  there exists a neighborhood $U_x \ni x$ such that
 inequality \eqref{YvX} remains valid for $Y(x')$ in place of $Y(x)$ for each point  $x' \in U_x$.
 We can arrange a finite covering
of $\bar V$ by the neighborhoods $U_i=U_{x_i}, \ i=1, \ldots , N$, in each of which
\[\forall x \in U_i: \ \left\|Y(x) -\sum_{\alpha_i \in A_i}v_{i\alpha_i}(x_i)X^\theta_{\alpha_i}(x) \right\|_{L_1(\Theta)}< \eps , \ i=1, \ldots ,N.  \]

Choose a smooth partition of unity $\{\lambda_i(x)\}$ subject to the covering $\{U_i\}$ of $\bar{V}$. Take $\tilde V$ the union of the supports of $\lambda_i, \ i=1, \ldots , N$. Take $A_\eps=\bigcup_{i=1}^N A_i$ and put for each $\alpha \in A_\eps : \ v_\alpha(x)=\sum_{[i,\alpha_i=\alpha]}\lambda_i(x) v_{i\alpha_i}(x_i). \ \qed$


   Coming {\tt back to the  proof of Proposition}~\ref{gloec} we consider the trajectory $\bar{x}(\cdot)$ of the vector field $Y(x)$, which joins $\tilde{x}$ and $\hat{x}$. Denote $\bar v_\alpha(t)= v_\alpha(\bar x(t)), \ \alpha \in A_\eps$ and consider the time-variant differential equation
\begin{equation*}
\dot{x}^\theta=X_t^\theta(x)=\sum_{\alpha \in A_\eps}\bar v_\alpha(t)X^\theta_\alpha(x)
\end{equation*}
Note that $\bar v_\alpha(t)$ are smooth and the vector field $X_t^\theta(x)$ being analytic in $x$ is locally Lipschitzian.

Let $x^\theta(t)$ be the trajectory of this equation starting at $\tilde x$.

Without lack of generality we may act as if $M$
were a  bounded connected subset of $R^n$.

To  find a bound for $\|x^\theta(T)-\hat x\|$ we compute
\[x^\theta(T)-\hat x=x^\theta(T)-\bar x(T)=
\int_0^T\left(X^\theta
(x^\theta(\tau))-Y(\bar x(\tau))\right)d\tau ,\]
and proceed with the estimates for the norms in $\mathbb{R}^n$.
\begin{eqnarray*}\|x^\theta(t)-\bar x(t)\|=\left\|\int_0^t\left(X_t^\theta(x^\theta(\tau))-Y(\bar x(\tau))\right)d\tau  \right\| \leq \\   \leq \int_0^t\left\|X_t^\theta(x^\theta(\tau))-X_t^\theta(\bar x(\tau))\right\|d\tau + \int_0^t\left\|X_t^\theta(\bar x(\tau))-Y(\bar x(\tau))\right\|d\tau \leq \\ \leq  L_{X}\int_0^t \left\|x^\theta(\tau)-\bar x(\tau)\right\|+\int_0^t\left\|X_t^\theta(\bar x(\tau))-Y(\bar x(\tau))\right\|d\tau d\tau ,\end{eqnarray*}
as long as $x^\theta(\cdot)$ does not leave $\bar V$. Here $L_{X}$ is Lipschitz constant for $X_t^\theta(x)$ on $\bar V$.

Then integrating with respect to $\theta$ and applying  Fubini theorem we get
\begin{eqnarray*}
\int_\Theta
\|x^\theta(t)-\bar x(t)\|d\theta
\leq \int_0^t\int_\Theta\left\|X_t^\theta(\bar x(\tau))-Y(\bar x(\tau))\right\| d\theta d\tau + \\ +   L_{X}\int_0^t \int_\Theta\left\|x^\theta(\tau)-\bar x(\tau)\right\|d\theta d\tau .
\end{eqnarray*}
By virtue of \eqref{YvX}  the last inequality becomes
\[\int_\Theta
\|x^\theta(t)-\bar x(t)\|d\theta \leq \eps t+L_{X}\int_0^t \int_\Theta\left\|x^\theta(\tau)-\bar x(\tau)\right\|d\theta d\tau ,\]
and by virtue of Gronwall lemma
\[\int_\Theta
\|x^\theta(T)-\bar x(T)\|d\theta  \leq \frac{\eps}{L_{X}}\left(e^{L_{X}T}-1\right),\]
wherefrom the claim of Proposition \ref{gloec} follows. $\qed$

\subsection{Lie extension}
We have just proved  approximate controllability for an extended ensemble by means of
a high-dimensional {\it extended} control. Now we have to prove, that the same goal is doable by means of lower-dimensional control. This is done in iterative way via so-called {\it Lie extensions}.

The following result shows, that the control-linear $2$-input ensemble
\begin{equation}\label{eq_6}
  \frac{d}{d t} x^\theta(t) =X^\theta(x) u(t)+Y^\theta(x) v(t),
\end{equation}
and the extended $3$-input ensemble
\begin{equation}\label{eq_7}
  \frac{d}{d t} x^\theta(t) =X^\theta(x) u_e(t)+Y^\theta(x) v_e(t)+[X^\theta , Y^\theta ](x)w_e(t).
\end{equation}
 have (approximately)  the same steering capacities, according to Definition~\ref{acodef}.

\begin{prop}
\label{liex}
If  the   ensemble \eqref{eq_7} can be steered  in time $T$ from $\tilde x$ to $\hat x$ approximately, then the same
is valid  for the   ensemble \eqref{eq_6}. $\qed$
\end{prop}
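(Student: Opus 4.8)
The plan is to realize the additional bracket direction $[X^\theta,Y^\theta]$ as a limit of concatenated flows of $X^\theta$ and $Y^\theta$, exploiting the fact that all systems in the ensemble share the same scalar controls $u,v$. The classical single-system identity is that the commutator of flows
\[
e^{-sY}\circ e^{-sX}\circ e^{sY}\circ e^{sX} = e^{s^2[X,Y]+o(s^2)},
\]
which is exactly the content of the usual Lie-extension step in the Rashevsky-Chow argument. The point I would stress is that this identity holds simultaneously for every $\theta\in\Theta$ because the four flows are generated by the \emph{same} control profile; the only $\theta$-dependence sits in the vector fields $X^\theta,Y^\theta$, so the remainder $o(s^2)$ is controlled uniformly in $\theta$ provided we work on a fixed compact set where Assumptions~\ref{unal},\ref{unl2} give uniform analyticity and continuity in $\theta$.

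First I would reduce to approximating, over a short time interval, the action of a single bracket generator $[X^\theta,Y^\theta]w_e(t)$ with $w_e$ a constant (or piecewise constant) control. On a small time step $\Delta$ one wants to move each $x^\theta$ approximately by $\Delta\,w_e\,[X^\theta,Y^\theta](x^\theta)$; the commutator-of-flows construction achieves a displacement of order $s^2[X^\theta,Y^\theta]$ using four pulses of $u$ and $v$ of amplitude $\sim s$, so choosing $s^2 \approx \Delta\,w_e$ reproduces the desired infinitesimal motion while the auxiliary motions along $X^\theta$ and $Y^\theta$ cancel to leading order. Since the controls $u_e,v_e$ for the genuine $X^\theta,Y^\theta$ directions can simply be superimposed on these pulse trains, the three-input motion of \eqref{eq_7} is reproduced by a two-input control of \eqref{eq_6} up to an error that is uniformly small in $\theta$.

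Then I would pass from the infinitesimal statement to the finite-time one by a standard time-discretization and Gronwall argument, mirroring the estimate already carried out in the proof of Proposition~\ref{gloec}. Partition $[0,T]$ into $n$ subintervals; on each, replace the bracket-driven motion of \eqref{eq_7} by the four-pulse commutator for \eqref{eq_6}; integrate the discrepancy in the $L_1(\Theta)$-norm, bound the per-step error by $o(\Delta)$ uniformly in $\theta$ (again using compactness of $\bar V$ and continuity of $\theta\mapsto X^\theta$), and apply the Gronwall lemma to the accumulated $L_1(\Theta)$-distance between the two ensemble trajectories. Letting $n\to\infty$ drives the total error to zero, so any $\hat x$ approximately reachable by \eqref{eq_7} is approximately reachable by \eqref{eq_6} in the sense of Definition~\ref{acodef}.

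The main obstacle I expect is the \emph{uniformity in $\theta$} of the $o(s^2)$ remainder in the commutator expansion: for a single system this is a routine Taylor estimate, but here $\Theta$ is only a compact space with a continuity assumption, so I must verify that the Chen-series remainder is bounded by a modulus of continuity independent of $\theta$. This is where Assumption~\ref{unal} (uniform analyticity in $x$ on the complex neighborhood $B_\rho(M)$) does the real work: it yields $\theta$-uniform bounds on the higher $x$-derivatives of $X^\theta,Y^\theta$ on $\bar V$, hence a $\theta$-uniform bound on the flow remainder, which then survives integration against $d\theta$. Making that uniform remainder estimate precise, rather than the algebra of the pulse construction, is the technical heart of the proposition.
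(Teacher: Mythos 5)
Your argument is correct in substance, but it takes a genuinely different route from the paper's proof. You realize the bracket direction by the classical four-pulse commutator of flows, $e^{-sY}\circ e^{-sX}\circ e^{sY}\circ e^{sX}=e^{s^2[X,Y]+o(s^2)}$, applied cell-by-cell on a time discretization with amplitude scaling $s\sim\sqrt{\Delta\,|w_e|}$ (so the gain grows like $\Delta^{-1/2}$, admissible since the controls are unconstrained), and you close the estimate with a per-step $o(\Delta)$ remainder that is uniform in $\theta$ thanks to Assumption~\ref{unal} together with Cauchy estimates on $B_\rho(M)$, followed by a Gronwall argument in $L_1(\Theta)$. The paper instead works globally in time: it perturbs the extended controls by fast-oscillating sinusoids, $u_\eps=u_e+\eps\dot U_\eps$, $v_\eps=v_e+\eps^{-1}\hat v_\eps$ with $U_\eps(t)=2\sin(t/\eps^2)w_e(t)$ and $\hat v_\eps(t)=\sin(t/\eps^2)$, extracts the bracket term from the identity $U_\eps(t)\hat v_\eps(t)=w_e(t)-w_e(t)\cos(2t/\eps^2)$ by means of the variational (variation-of-constants) formula of Lemma~\ref{varf}, kills the oscillatory remainders via Remark~\ref{wfo}, and then passes from pointwise-in-$\theta$ convergence of the endpoints to $L_1(\Theta)$ convergence by equiboundedness and dominated convergence rather than by a uniform remainder bound. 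Your route is more elementary (no chronological calculus) and makes the $\theta$-uniformity explicit, which is arguably cleaner for the $L_1(\Theta)$ conclusion; the paper's route avoids discretization bookkeeping, superimposes $u_e,v_e$ exactly rather than approximately, and yields convergence in the stronger seminorms of Remark~\ref{normsk}. To make your version complete you should spell out three routine points: negative values of $w_e$ require reversing the pulse order (producing $e^{-s^2[X,Y]+o(s^2)}$); the superposition of $u_e,v_e$ onto the pulse trains is not literally free --- either split each step (drift phase, then pulse phase, with splitting error $O(\Delta^2)$) or bound the cross-commutator terms, which are $O(s\Delta)=O(\Delta^{3/2})$ per step; and a bootstrap is needed to keep all trajectories inside the fixed compact set where the uniform bounds hold, which works because the intermediate excursions during the pulses are only $O(\sqrt{\Delta})$.
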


 Using the statement one can easily complete the proof of Theorem \ref{trc}. Proposition \ref{gloec} demonstrates that an  extended ensemble \eqref{ext_ens} can be steered from $\tilde x$ to $x^\theta(T)$ with \eqref{fp_contlin} satisfied.   By Proposition \ref{liex} the same result can be achieved with a diminished (by $1$) dimension of controls. Proceeding by (inverse) induction we  prove, that the original ensemble  \eqref{clz} can be steered approximately from $\tilde x$ to $\hat x$. \hfill $\qed$

\subsection{Proof of Proposition \ref{liex}}
The construction is based on fast-oscillating functions and on techniques adopted for relaxed controls;
see \cite{AgSr} for an application of these ideas to the control of Navier-Stokes equation.

 Let $u_e(t),v_e(t),w_e(t)$ be the controls, which steer the system \eqref{eq_7} approximately to $\hat x$, acording to Definition~\ref{acodef}. It suffices to establish the statement for smooth $w_e(t)$, as far as smooth functions
  are dense in the space of measurable functions in $L_1$-metric.

 We will use the formula, which is a nonlinear version of the 'variation of constants' method.
 Its more general form - variational formula for time variant vector fields  can be found in \cite[Ch.2]{AgSch}.

Let $\chro_0^t X_\tau d\tau$ denote the flow generated by a time variant vector field $X_t, \ F_0=Id$,
while $e^{tY}$ be the flow, generated by a time-invariant vector field $Y$.

 \begin{lem}\label{varf}
   Let $f_\tau(x),g(x)$ be real analytic in $x$, $f_\tau$ integrable in $\tau$. The flow $P_t=\chro_0^t f_\tau(x)+g(x)u(\tau)d\tau$, corresponding  to
    the differential equation
\begin{equation}\label{caf}
  \dot{x}=f_t(x)+g(x)u(t), \ U(0)=0,
\end{equation}
   can be represented as a composition of two flows
  \begin{equation}\label{vf}
    \chro_0^t \left(f_\tau(x)+g(x)u(\tau)\right)d\tau=\chro_0^t e^{U(\tau)\ad g}f_\tau d\tau \circ e^{gU(t)},    \end{equation}
   where $U(t)=\int_0^t u(\tau)d\tau. \ \qed$
   \end{lem}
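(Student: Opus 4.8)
The plan is to verify \eqref{vf} by exhibiting both sides as solutions of the same operator differential equation with the same initial value and then invoking uniqueness within the chronological calculus. Throughout I use the standard convention (as in \cite[Ch.~2]{AgSch}) that the right chronological exponential $P_t=\chro_0^t X_\tau\,d\tau$ is the one-parameter family of flows solving $\dot P_t=P_t\circ X_t$, $P_0=\mathrm{Id}$, that vector fields act as derivations, and that $\mathrm{Ad}\,e^{sg}=e^{s\,\ad g}$.

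First I would peel off the control direction. Set $Q_t=e^{gU(t)}$, the flow of the time-invariant field $g$ reparameterized by $U(t)=\int_0^t u(\tau)\,d\tau$; since $g$ is autonomous and $u(t)$ is scalar, $\dot Q_t=Q_t\circ u(t)g$. I then make the ansatz $P_t=R_t\circ Q_t$ with $R_0=\mathrm{Id}$ and differentiate using the product rule:
\[
\dot P_t=\dot R_t\circ Q_t+R_t\circ Q_t\circ u(t)g .
\]
Since $P_t$ is the flow of the full equation \eqref{caf}, it also satisfies $\dot P_t=P_t\circ\bigl(f_t+g\,u(t)\bigr)=R_t\circ Q_t\circ f_t+R_t\circ Q_t\circ u(t)g$. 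Equating the two expressions, the terms carrying $u(t)g$ cancel identically, which leaves $\dot R_t\circ Q_t=R_t\circ Q_t\circ f_t$.

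Next I would solve for $R_t$. Composing on the right with $Q_t^{-1}$ gives $\dot R_t=R_t\circ\bigl(Q_t\circ f_t\circ Q_t^{-1}\bigr)$, and the bracketed operator is precisely $\mathrm{Ad}\,Q_t$ acting on the vector field $f_t$. Because $Q_t=e^{gU(t)}$, the adjoint identity gives $Q_t\circ f_t\circ Q_t^{-1}=e^{U(t)\,\ad g}f_t$, so $R_t$ solves $\dot R_t=R_t\circ\bigl(e^{U(t)\,\ad g}f_t\bigr)$ with $R_0=\mathrm{Id}$. By the very definition of the right chronological exponential this says $R_t=\chro_0^t e^{U(\tau)\,\ad g}f_\tau\,d\tau$, and substituting into the ansatz yields \eqref{vf}.

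The main obstacle is not the algebra, which is brief, but the analytic justification that these formal manipulations are identities of genuine flows: one must ensure that the operator series $e^{U(\tau)\,\ad g}$ and the two chronological exponentials converge and act consistently on the relevant functions. This is exactly where the hypotheses enter --- real-analyticity of $f_\tau$ and $g$ in $x$ together with integrability of $f_\tau$ in $\tau$ guarantee, on a sufficiently small state-neighborhood and time interval, absolute convergence of the exponential and chronological series on germs of analytic functions. I would therefore record the convergence estimates (or cite the corresponding statements in \cite[Ch.~2]{AgSch}) and observe that, since $U(t)$ can be made small by the time and control rescaling already used elsewhere in the section, the domain of validity can always be arranged.
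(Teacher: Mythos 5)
Your proposal is correct and coincides with the paper's own treatment: the paper does not prove Lemma~\ref{varf} but defers to \cite[Ch.~2]{AgSch}, and your variation-of-constants argument --- peeling off $Q_t=e^{gU(t)}$, differentiating the ansatz $P_t=R_t\circ Q_t$ so the $u(t)g$ terms cancel, and identifying $\dot R_t=R_t\circ\bigl(\mathrm{Ad}\,Q_t\bigr)f_t=R_t\circ e^{U(t)\,\ad g}f_t$ --- is precisely the standard derivation given there. Your closing remarks on convergence of the exponential and chronological series for real-analytic fields correctly account for the hypotheses under which the formula is asserted.
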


 The  operator $\ad_{Z}$,
determined by the vector field $Z$,  acts on vector fields as:
$\ad_Z Z_1=[Z,Z_1]$ -  the Lie bracket of $Z$ and $Z_1$, while the operator exponential
 $e^{\ad_Z}=\sum_{j=0}^\infty \frac{(\ad_Z)^j}{j!}$.

Note that  $e^{U(t) g(x)}$ is time-$U(t)$ element  of the flow of the time-invariant vector field $g$.

To relate the formula \eqref{vf} to fast-oscillating functions we choose a \linebreak $1$-periodic measurable bounded function $v(t)$ with $\int_0^1v(t)dt=0$. Feeding into
\eqref{caf}  a {\it fast-oscillating, possibly high-gain,} control $u_\eps(t)=\eps^{-\alpha} v(t/\eps^\beta)$, $0 \leq \alpha < \beta$, we get by \eqref{vf}
\[\chro_0^t \left(f_\tau(x)+g(x)u_\eps(\tau)\right)d\tau=\chro_0^t e^{\eps^{\beta-\alpha} V(\tau /\eps^\beta)\ad g}f_\tau d\tau \circ e^{\eps^{\beta-\alpha}  V(t/\eps^\beta)}g, \]
where $V(t)=\int_0^t v(\tau)d\tau$  is $1$-periodic Lipschitzian function.

Expanding the exponential $e^{\eps^{\beta-\alpha}  V(\tau /\eps^\beta)\ad g}$ we get
\begin{align*}
  \chro_0^t \left(f_\tau(x)+g(x)u_\eps(\tau)\right)d\tau=&
  \chro_0^t \left( f_\tau(x) +O(\eps^{\beta-\alpha})\right)d\tau \circ \\ \circ \left(I+O(\eps^{\beta-\alpha})\right) =&\chro_0^t f_\tau(x)d\tau \circ (I+O(\eps^{\beta-\alpha})). \nonumber
\end{align*}
This demonstrates that the effect of fast-oscillating perturbation $g(x)u_\eps(\tau)$ tends to $0$ as $\eps \to 0$.

\begin{rem}\label{normsk}
The expression $O(\eps^{\beta-\alpha})$ above regards each  of the seminorms $\|X(x)\|_{s,K}, \|P\|_{s,K}$, which define the convergence of the derivatives of order$\leq s$ on a compact $K \ \Box$.
\end{rem}

\begin{rem}\label{wfo}
  Similar conclusion holds if one takes $u_\eps(t)=w(t)\eps^{-\alpha} v(t/\eps^\beta)$, where $w(\cdot)$
  is, say, Lipschitzian function. The conclusion is achieved by similar reasoning, given the fact that the primitive of      $u_\eps(t)$ in this case is $\eps^{\beta-\alpha}\left(w(t)V(t/\eps^\beta)-\int_0^t V(\tau/\eps^\beta)\dot{w}\tau d\tau \right)=O(\eps^{\beta-\alpha})$, as $\eps \to +0$.
\end{rem}

Coming back to the 2-input system \eqref{eq_6} we choose the controls $u_\eps(t), v_\eps(t)$ of the form
\begin{equation}\label{econ}
  u_\eps(t)=u_e(t)+\eps \dot{U}_\eps(t), \ v_\eps(t)=v_e(t)+\eps^{-1}\hat{v}_\eps(t),
\end{equation}
where $U_\eps(t)$ is function, $U_\eps(0)=0$. Both $U_\eps(t)$ and $\hat v_\eps (t)$ will be specified in a moment.

Feeding the controls  \eqref{econ} into  the  system \eqref{eq_6} we get
\begin{equation}\label{eq_8}
  \frac{d}{d t} x^\theta(t)=X^\theta(x)u_e(t)+Y^\theta(x)\left(v_e(t)+\eps^{-1}\hat{v}_\eps(t)\right)+X^\theta(x) \eps  \dot{U}_\eps(t).
\end{equation}

Applying \eqref{vf} we  represent the flow of \eqref{eq_8} as a
composition of  flows
\begin{align}\label{crex}
\chro_0^t X^\theta(x)u_e(t)+e^{\eps U_\eps(t) \ad X^\theta}Y^\theta(x)(v_e(t)+&\eps^{-1}\hat{v}_\eps(t))dt \circ \nonumber \\ &\circ  e^{\eps U_\eps(t) X^\theta(x)}.
\end{align}


We impose the condition $U_\eps(T)=0$, so that $e^{\eps U_\eps(T) X^\theta(x)}=I$ and we can restrict our attention to the first factor of the composition  \eqref{crex}.

Proceeding with the expansion of the exponential $e^{\eps U_\eps(t) \ad X^\theta(x)} $ in \eqref{crex} we
 rewrite it as
\begin{align}\label{xeps}
 \!\! \chro_0^t(X^\theta (x)u_e(t)+ & Y^\theta (x)v_e(t)+ \\ & + Y^\theta (x) \eps^{-1}\hat{v}_\eps(t)+[X^\theta,Y^\theta](x)U_\eps(t)\hat{v}_\eps(t)+O(\eps))dt. \nonumber
\end{align}

We wish the flow  \eqref{xeps} to approximate the flow generated by the equation  \eqref{eq_7}.
To achieve this we take the functions
\begin{equation}\label{uveps}
  U_\eps(t)=2\sin(t/\eps^2)w_e(t), \ \hat{v}_\eps(t)=\sin(t/\eps^2);
\end{equation}
we choose $\eps$ from the sequence
\begin{equation}\label{neps}
  \eps_n=(T/\pi n)^{1/2}, n=1,2, \ldots ,
\end{equation}
 so that $U_\eps(T)=0$.
Then
\[ U_\eps(t)\hat{v}_\eps(t)=w_e(t)-w_e(t)\cos(2t/\eps^2), \]
so that feeding $U_\eps(t), \hat v_\eps(t)$ into \eqref{xeps} gives us
\begin{eqnarray}\label{xxeps}
  \chro_0^t\!\! \left(X^\theta (x)u_e(t)+ Y^\theta (x)v_e(t)+[X^\theta,Y^\theta](x)w_e(t)+ \right.\\
   +\left.Y^\theta(x) \eps^{-1}\sin(t/\eps^2)-[X^\theta,Y^\theta](x)w_e(t)\cos(2t/\eps^2)+O(\eps)\right)dt. \nonumber
\end{eqnarray}
One can apply formula \eqref{vf} to the flow taking $g=Y^\theta(x)$,   $u(t)=\eps^{-1}\sin(t/\eps^2)$ and denoting $f_t$ the rest of the vector field under the exponential sign. Then we represent the flow \eqref{xxeps} as a composition
\begin{eqnarray*}
  \chro_0^t\!\! \left(X^\theta (x)u_e(t)+ Y^\theta (x)v_e(t)+[X^\theta,Y^\theta](x)w_e(t)- \right. \\
 -\left.[X^\theta,Y^\theta](x)w_e(t)\cos(2t/\eps^2)+O(\eps)\right)dt \circ e^{-\eps \cos (t/\eps^2)}.
\end{eqnarray*}
According to the Remark \ref{wfo} we conclude that
the flow of the equation \eqref{eq_8} can be represented as
\begin{align*}
  \chro_0^t (\ X^\theta (x)u_e(t)+ Y^\theta (x)v_e(t)+  [X^\theta,Y^\theta](x)w_e(t)+ & O(\eps)\ )dt \circ\\  \circ &  (I+ O(\eps))=\\ =\chro_0^t\!\! (\ X^\theta (x)u_e(t)+ Y^\theta (x)v_e(t)+[X^\theta,Y^\theta](x)w_e(t)\ )dt &\circ \left(I+O(\eps)\right).
\end{align*}

Denote by $x^\theta_e(t)$ the trajectory of the $3$-input ensemble \eqref{eq_7}. We  have proved that for the trajectories $x_{\eps_n}(t)$ of the $2$-input ensemble \eqref{eq_6}, driven by the controls  $u_{\eps_n}(t), v_{\eps_n}(t)$, defined by \eqref{econ}-\eqref{uveps}-\eqref{neps},   there holds for each $\theta \in \Theta$:
\begin{equation}\label{eacht}
 \lim_{n \to \infty}\|x^\theta_e(T)-x^\theta_{\eps_n}(T)\|=0.
\end{equation}

 Recall that the real-analytic vector fields $f_j^\theta$ of the ensemble \eqref{clz} depend continuosly on $\theta$; the same holds for the Lie brackets of the vector fields. Then, since $\Theta$ is compact, one can easily check that
 $x^\theta_e(T)$ and $x^\theta_{\eps_n}(T)$ are equibounded for all $\theta$ and $n$£.
 Then by Lebesgue theorem we get from  \eqref{eacht}
\begin{equation*}
 \lim_{n \to \infty}\int_\Theta\|x^\theta_e(T)-x^\theta_{\eps_n}(T)\|d\theta=0. \ \Box
\end{equation*}



\section{Appendix}

\subsection{Proof of Theorem \ref{lobrens}}
 We fix $\dim M=n$.

It is enough to show that the linear ensemble control is generically bracket generating already for two families of vector fields \[X=(X^1,X^2,\ldots, X^N), \ Y=(Y^1,Y^2,\ldots, Y^N).\]

In the trivial ensemble ($N=1$) case, it is easy to see that for a generic pair $X,Y$, the linear span of these fields has rank at least one everywhere on $M$, hence we can always assume that either $X$ or $Y$ are non-vanishing in a vicinity of a point of $M$. After that, the generic generating property follows almost immediately.

If, say $X\neq 0$ at a point $p\in M$, then the codimension of the subset of $(S-1)$-jets of vector fields $Y$ in $M$ at $p$ such that $S$  vectors $\ad_X^k Y(p), \ k=1, \ldots, S-1,$ do {\em not} span $T_pM$, is $S-(n-1)$. Hence,  by R.Thom's transversality theorem, for $S\geq 2n$ this subset is avoided by an open dense set of vector fields $Y$ in $M$.

In the nontrivial  ensemble (finite $|\Theta|>1$) case, a similar approach works, but requires some modifications. We still will choose a control generating a vector field $X$, and will argue that differentiating a vector field corresponding to a different (constant) control iteratively will produce enough vectors to span $TM^N=\bigoplus_\theta TM_\theta$.

There is a small wrinkle here: as the vector fields, tangent to different components $M_\theta$ of the ensemble, do not interact, we will need to have all of $X^\theta(p_\theta)\neq 0$. We cannot however claim that that is true for either $X$ or $Y$: consider, as an example, a point $(x_{1},x_{2},\ldots,x_N)\in M^N$, where $X^{\theta}(x_{\theta})=0$ are non-degenerate zeros. Then for any perturbation of $X^{\theta_i}$'s they will have a point in $M^N$ where all of the vector fields will vanish on one of the ensemble factors.

To overcome this difficulty, we fix a generic collection of controls
$$
\{u_1,\ldots,u_{N+1}\}, u_k=(u_{k,1}, u_{k,2})\in\Real^2,
$$
such that any $2$ of them are linearly independent. Then generically, at least for one of the indices $k$, $L_k^\theta(p_\theta)=u_{k,1}X^\theta(p_\theta)+u_{k,2}Y^\theta(p_\theta)\neq 0$ for all $\theta\in\Theta$. Indeed, otherwise, by pigeonhole principle, there will be at least one of the factors $M^\theta$, two of the vector fields $L_k^\theta, L_l^\theta$ will vanish at $p_\theta \in M^\theta$, and, by assumptions on $u_{\cdot,\cdot}$'s, both $X^\theta$ and $Y^\theta$ would vanish at $p_\theta$. This cannot happen generically, as discussed above.

Hence we can assume that for any point $p=(p_1,\ldots,p_N)\in M^\Theta$, generically, $X^\theta:=L_k^\theta$ is non  null at all factors $p_\theta$ (for a $1\leq k\leq N+1$). Setting $Y^\theta:=L_{(k+1)\mod N}^\theta$, and forming $S$ Lie derivatives \[Y(p), \ad_X Y(p),\ldots, \ad_X^{S-1}Y(p),\] we conclude that generically for $S>2Nn$, these derivatives generate the whole tangent space $T_pM=\bigoplus_\theta T_{p_\theta}M$, at $p$ and in its vicinity. Compactness of $M$ implies that generically, $X,Y$ are bracket generating, and hence the system is fully controllable.
\qed

\subsection{Proof of Lemma \ref{dtr}}

Continuing with the induction on $N$
we introduce a linear map  $\Lambda_{J,L}:\mathbb{R}^3 \to \mathbb{R}^3$:
\begin{equation*}
 K \mapsto [\mathcal{E}_J,V^0_L](K), \hspace{3mm}  [\mathcal{E}_J,V^0_L](K)=L \times JK+K \times JL.
\end{equation*}

Its matrix in the basis of principal axes of the body
has form
\begin{equation}\label{lamd}
\Lambda_{J,L}=D_J \hat L, \
  D_J=\mbox{diag}\left(J_3-J_2,J_1-J_3,J_2-J_1\right), \ \hat L=\left(
            \begin{array}{ccc}
              0 & L_3 & L_2 \\
              L_3 & 0 & L_1 \\
              L_2 & L_1 & 0 \\
            \end{array}
          \right). \nonumber
\end{equation}
Elements of $\Lambda_{J,L}$ are homogeneous of first order in $J$.
The constant vector fields in \eqref{libr}  can be represented as
\[V^{m+1}_{J,L}=\Lambda_{J,L} V^m_{J,L}=\Lambda^m_{J,L}V^1_{J,L}.  \]

By direct computation  $\det \hat L= 2L_1L_2L_3$ and hence given the asymmetry of the body, we conclude thet $\det \Lambda_L \neq 0$, provided that $L$ does not belong to any of coordinate planes $P_i: \ L_i=0; i=1,2,3$.

Let $\mathbf{L}$  be the complement of the union of three principal axes of the body and the three planes $P_1,P_2,P_3$.

According to the aforesaid
\begin{eqnarray}\label{tli}
\forall L \in  \mathbf{L}, \ m \geq 1: \nonumber \\
 \mbox{\it the  vectors} \
V^m_{J,L}, V^{m+1}_{J,L}, V^{m+2}_{J,L} \ \mbox{\it are  linearly independent}.
\end{eqnarray}

The determinant  $R_N$ can be represented as
\footnote{$R_N$ is  (non-commutative) matrix version of  Vandermond determinant. Any explicit computations for such determinant are desirable, but we are not aware of any}
\begin{align*}
   R_N &\left(J^1, \ldots , J^N;L\right)=\\
 &=\det\left(
   \begin{array}{cccc|ccc}
     L & \Lambda_{J^1,L}L &\cdots & \Lambda^{3N-4}_{J^1,L}L & \Lambda^{3N-3}_{J^1,L}L & \Lambda^{3N-2}_{J^1,L}L & \Lambda^{3N-1}_{J^1,L}L\\
     L & \Lambda_{J^2,L}L  & \cdots & \Lambda^{3N-4}_{J^2,L}L & \Lambda^{3N-3}_{J^2,L}L & \Lambda^{3N-2}_{J^2,L}L & \Lambda^{3N-1}_{J^2,L}L\\
     \cdots & \cdots & \cdots & \cdots & \cdots&\cdots & \cdots  \\
     \hline
     L & \Lambda_{J^N,L}L & \cdots & \Lambda^{3N-4}_{J^N,L}L & \Lambda^{3N-3}_{J^N,L}L & \Lambda^{3N-2}_{J^N,L}L & \Lambda^{3N-1}_{J^N,L}L\\
   \end{array}
 \right).
   \end{align*}

We will prove that for $L \in \mathbf{L}$,  the determinant $R_N \left(J^1, \ldots , J^N;L\right)$ defines a nontrivial polynomial in
$J^1, \ldots , J^N$. This is true for $N=1$.

We write $R_N$ in  a  block form
\[R_N=\left(
   \begin{array}{c|c}
     \tilde R_{N-1}& W_{1} \\
\hline
W_{2} & \tilde{R}\\
\end{array}
 \right),
 \]
where $\tilde R_{N-1}$ and $\tilde R$ are $3(N-1) \times 3(N-1)$ and $3 \times 3$ blocks correspondingly, and $W_1,W_2$ have appropriate dimensions.

By induction assumption  $\det \tilde R_{N-1} \neq 0$, while  $\det \tilde{R} \neq  0$ by \eqref{tli}.

To verify that $\det R_N$ does not vanish identically for all $J^1, \ldots , J^N$,
we substitute $\eps J^1, \ldots , \eps J^{N-1}$ in place of $J^1, \ldots , J^{N-1}$.
This results in  multiplication  by $\eps^{k-1}$ of the $k$-th column {\it of the upper block} $\left(R_{N-1}\left|W_1\right.\right), \ k=1, \ldots 3N$.
Denote the resulting matrix by $R_N(\eps)$:
\begin{equation*}
  R_N(\eps)=\left(
   \begin{array}{c|c}
     \tilde R_{N-1}(\eps)& W_{1}(\eps) \\
\hline
W_{2} & \tilde{R}\\
\end{array}
 \right),
\end{equation*}
with
\[\left(R_{N-1}(\eps),W_1(\eps)\right)=\left(\tilde R_{N-1},W_1\right)D^\eps, \ D^\eps=\mbox{diag}\left(1,\eps , \ldots , \eps^{3N-1}\right) .\]

Multiplying the matrix $R_N(\eps)$ from the left by a {\it nonsingular} matrix
\[\left(
   \begin{array}{c|c}
     I & -W_1(\eps) \tilde{R}^{-1} \\
\hline
0 & \tilde{R}^{-1}\\
\end{array}
 \right)
 \]
 we arrive  to the matrix
\[\hat R_N(\eps)=\left(
   \begin{array}{c|c}
     R_{N-1}(\eps) - W_1(\eps)\tilde R^{-1}W_2& 0 \\
\hline
\tilde{R}^{-1} W_{2} & I\\
\end{array}
 \right).
 \]

The elements of $W_1(\eps)$ are $O(\eps^{3N-3})$ as $\eps \to 0$, so are the elements of   $W_1(\eps)\tilde R^{-1}W_2$.

Multiplying the matrix $\hat R_N(\eps)$ from the right by a diagonal matrix $$\mbox{\rm diag}(1,\eps^{-1}, \ldots ,\eps^{-(3N-4)},1,1,1)$$
we get the  matrix
\[\bar R_N(\eps)=\left(
   \begin{array}{c|c}
     R_{N-1} + Y_1(\eps)& 0 \\
\hline
 Y_2(\eps) & I\\
\end{array}
 \right),
 \]
 where $Y_1(\eps)=O(\eps)$. The determinant $\det \bar R_N(\eps)$ is close to $\det \tilde R_{N-1} \neq 0$, and hence differs from $0$, whenever $\eps$ is sufficiently small. Therefore  $R_N(\eps)= R_N \left(\eps J^1, \ldots , \eps J^{n-1}, J^N;L\right)$ is nonsingular for sufficiently small $\eps>0$. \hfill $\qed$

\subsection{Proof of Theorem \ref{lifr}}


Introduce  matrix $\Gamma=(\gamma_{mr}), \ m=1, \ldots , \infty; r=1 ,  \ldots ,R$, with
$\gamma_{mr}$, defined by \eqref{dga}.
Let $\hat \Gamma$ be the upper $(R \times R)$-block of the $(\infty \times r)$-matrix $\Gamma$ and $\tilde \Gamma$ be
the resting infinite block.

We will choose the controls $U(\cdot), v_1(\cdot), \ldots , v_R(\cdot)$ in such a way that the matrix
$\hat \Gamma$ would be (non singular) lower triangular matrix with nonvanishing
diagonal elements. At the same time we will be able to guarantee smallness of   $\|\tilde \Gamma y\|_{\ell_2}$.

We take  Legendre polynomials:
  $P_k(t)=\frac{1}{k!}\frac{d^k}{dt^k}\left((t^2-t)^k\right) $
orthogonal   on $[0,1]$ and put
\begin{equation*}
  U(t)= \int_0^tP_1(s)ds= (t^2-t), \   v_r(t)=P_{2r}(t), \ r=1, \ldots , R.
\end{equation*}
Note that $U(1)=0$ and  $V(1)=\int_0^1 v_r(s)ds=0, \ r=1, \ldots ,R$, since $P_{2r}(t)$ is orthogonal to $1=P_0(t)$.

Evidently  $(U(t))^m$ is polynomial of degree $2m$, hence:
\begin{equation*}
\gamma_{mr}=\int_0^1(U(t))^mv_r(t)dt =0, \ \mbox{for} \ m<r ,
\end{equation*}
while
\[\gamma_{rr}=\int_0^1(U(t))^rv_r(t)dt =\int_0^1 t^{2r}P_{2r}(t)dt\neq 0, \ \forall r .\]

Note that $\gamma_{mr}, m=1, \ldots  , r=1, \ldots $ admit a common  upper bound:
  \begin{align*}
 |\tilde{\gamma}_{mr}|=&\left|\int_0^1 (t-t^2)^m P_{2r}(t)dt  \right| \leq  \\
   \leq& \left(\int_0^1 (t^2-t)^{2m} dt \right)^{1/2}
 \left(\int_0^1 (P_{2r}(t))^2 dt \right)^{1/2}.
 \end{align*}
The first factor is $\leq 2^{-2}$  given that $|t-t^2| \leq 1/4$ on $[0,1]$. The second factor equals $\frac{1}{\sqrt{4r+1}} < 2^{-1}$ for each $r \geq 1$. Hence $|\gamma_{mr}| < 2^{-3}$.

Now we introduce small parameter $\eps$ defining:
\begin{equation*}
  U^\eps(t)=\eps U(t)=\eps (t-t^2), \ v_r^\eps(t)= \eps^{-r} v_r (t)=\eps^{-r}P_{2r}(t).
\end{equation*}

Substituting  $U^\eps(t)$ and $v^\eps_r(t)$ into  \eqref{dga} we get $\gamma^\eps_{mr}=\eps^{m-r}\gamma_{mr}$.

For chosen $U^\eps(t), v^\eps(t)$  the representation \eqref{mom1} for $z^\theta(1)$ takes form
\begin{equation}\label{tmom}
z_\theta(1)=\sum_{r=1}^R a_r(\theta) \gamma_{rr} y_r + \eps \sum_{r=1}^R\sum_{m=r+1}^\infty \eps^{m-(r+1)} a_m(\theta) \gamma_{mr} y_r .
\end{equation}

Let us take $y_r=c_r /\gamma_{rr}, \ r=1, \ldots , R$, where $c_r$ are the coefficients in \eqref{apreps1}, so that \[\left\|\hat z(\theta)-\sum_{r=1}^{R(\eps_1)} a_r(\theta) \gamma_{rr} y_r \right\|_{L_2(\Theta)} < \eps_1.   \]
Obviously the coefficients $y_r$ depend on $\eps_1$.


Now it rests to estimate the second addend at the right-hand side of \eqref{tmom}.
Given that $a_m(\theta)=\left(\int_{C_\rho}  \frac{f^\theta(\zeta)}{\zeta^{m+1}} d\zeta \right)$, we get the estimate
\begin{equation*}
 \left|a_m(\theta)\right|= \left|\int_{C_\rho}  \frac{f^\theta(\zeta)}{\zeta^{m+1}} d\zeta\right| \leq 2\pi \mu_f\rho^{-m},
\ \mu_f=\sup_{(\theta ,\zeta) \in \Theta \times B_\rho}\left|f^\theta (\zeta)\right|.
\end{equation*}

Then
\[\left|\sum_{m=r+1}^\infty \eps^{m-(r+1)} a_m(\theta) \gamma_{mr}\right| \leq 2^{-3}\left|\sum_{s=0}^\infty \eps^{s} \frac{2\pi\mu_f}{\rho^{s+(r+1)}}\right| \leq \frac{\pi\mu_f}{4\rho^R(\rho-\epsilon)},\]
and the second addend in \eqref{tmom} admits an upper bound:
\begin{equation}\label{erud}
  \frac{\eps \pi\mu_f}{4\rho^{R(\eps_1)}(\rho-\epsilon)}\sum_{r=1}^{R(\eps_1)} |y_r|.
  \end{equation}
The term  $\sum_{r=1}^{R(\eps_1)} |y_r|$ admits an upper bound $b_y(\eps_1)>0$.

Now choose $\eps>0$ such that
\[\frac{\eps \pi\mu_f}{2(\rho-\epsilon)}< \frac{\eps_1\rho^{R(\eps_1)}}{b_y(\eps_1)}.\]
Then the estimate \eqref{erud} of the "perturbation term" in \eqref{tmom}  is $< \eps_1$, and
\[\|\hat z(\theta)-z_\theta(1)\|_{L_2(\Theta)} < 2\eps_1. \hfill \qed  \]




\section{Acknowledgements}
YB was supported in part by AFOSR (FA9550-10-1-05678).

\end{document}